\newtheorem{theorem}{Theorem}[section]
\newtheorem{lemma}[theorem]{Lemma}
\newtheorem{proposition}[theorem]{Proposition}
\theoremstyle{definition}
\newtheorem{definition}[theorem]{Definition}
\newtheorem{assumption}[theorem]{Assumption}
\newtheorem{example}[theorem]{Example}
\newtheorem{remark}[theorem]{Remark}
\numberwithin{equation}{section}
\newcommand{\eps}{\varepsilon}
\newcommand{\calL}{\mathcal{L}}
\newcommand{\calF}{\mathcal{F}}
\newcommand{\sF}{\mathcal{F}}
\newcommand{\sD}{\mathcal{D}}
\newcommand{\calO}{\mathcal{O}}
\newcommand{\sO}{\mathcal{O}}
\newcommand{\calG}{\mathcal{G}} 
\newcommand{\sG}{\mathcal{G}}
\newcommand{\calA}{\mathcal{A}}
\newcommand{\calD}{\mathcal{D}}
\newcommand{\calP}{\mathcal{P}}
\newcommand{\calS}{\mathcal{S}}
\newcommand{\sS}{\mathcal{S}}
\newcommand{\calB}{\mathcal{B}}
\newcommand{\calM}{\mathcal{M}}
\newcommand{\sE}{\mathcal{E}} 
\newcommand{\sQ}{\mathcal{Q}} 
\newcommand{\sH}{\mathcal{H}}
\newcommand{\sI}{\mathcal{I}}
\newcommand{\calK}{\mathcal{K}}
\newcommand{\I}{\mathds{1}}
\newcommand{\bE}{\operatorname{\mathbb{E}}} 
\newcommand{\Pp}{\operatorname{\mathbb{P}}} 
\newcommand{\bP}{\operatorname{\mathbb{P}}} 
\newcommand{\bD}{\operatorname{\mathbb{D}}} 
\newcommand{\R}{\mathbb{R}}
\newcommand{\bR}{\mathbb{R}}
\newcommand{\bH}{\mathbb{H}}
\newcommand{\N}{{\mathds{N}}}
\def\bN{\N}
\newcommand{\Z}{{\mathbb Z}}
\newcommand{\bZ}{{\mathbb Z}}
\newcommand{\PBM}{P_{\text{BM}}}
\newcommand{\EBM}{E_{\text{BM}}}
\newcommand{\wh}{\widehat}
\newcommand{\wt}{\widetilde}
\def\dP{{d_P}}
\def\qP{{P}}
\def\qE{{E}}
\def\pd{{\partial}}
\def\q{\quad}
\def\be{\begin{equation}}
\def\ee{\end{equation}}
\def\bes{\begin{equation*}}
\def\ees{\end{equation*}}
\def\om{{\omega}}
\def\half{\frac12}
\def\lam{\lambda}
\def\bfP{{\bf P}}
\def\sK{\calK}
\def\sA{\calA}
\def\ip#1{{\langle #1 \rangle}}
\def\sms{\smallskip}
\def\sm{\smallskip \noindent}
\def\be{\begin{equation}}
\def\ee{\end{equation}}
\def\bes{\begin{equation*}}
\def\ees{\end{equation*}}
\def\ip#1{{\langle #1 \rangle}}
\def\bfP{{\bf P}}
\def\nn{\nonumber}
\def\eqd{{\buildrel (d) \over =}}
\title[Invariance principle]{Comparison of quenched and annealed invariance 
principles for random conductance model}
\author{Martin Barlow, Krzysztof Burdzy \and \'Ad\'am Tim\'ar}
\address{Department of Mathematics, University of British Columbia, 
Vancouver, B.C., Canada V6T 1Z2}
\email{barlow@math.ubc.ca}
\address{Department of Mathematics, Box 354350, University of Washington, 
Seattle, WA 98195, USA}
\email{burdzy@math.washington.edu}
\address{
Bolyai Institute, University of Szeged,
Aradi v. tere 1, 6720 Szeged,
Hungary
}
\email{madaramit@gmail.com
}
\thanks{Research supported in part by NSF Grant DMS-1206276, by 
NSERC, Canada, and Trinity College, Cambridge,  and by
MTA R\'enyi "Lendulet" Groups and Graphs Research Group.}
\begin{document}

\begin{abstract}
We show that there exists an ergodic conductance environment such that the 
weak (annealed) invariance principle holds for the corresponding  
continuous time random walk but the quenched invariance principle does not hold.
\end{abstract}

\maketitle

\section{Introduction}\label{intro}

Let $d\geq 2$ and let $ E_d$ be the set of all non oriented edges in the 
$d$-dimensional integer lattice, that is,  $E_d = \{e = \{x,y\}: x,y \in \Z^d, |x-y|=1\}$.
Let $\{\mu_e\}_{e\in E_d}$ be a random process with non-negative values, 
defined on some probability space $(\Omega, \calF, \Pp)$. 
The process $\{\mu_e\}_{e\in E_d}$ represents random conductances.  
We write $\mu_{xy}  = \mu_{yx} = \mu_{\{x,y\}}$ and set 
$\mu_{xy}=0$ if $\{x,y\} \notin E_d$. Set
\begin{align*}
\mu_x = \sum_y \mu_{xy}, \qquad P(x,y) = \frac{\mu_{xy}}{\mu_x},
\end{align*}
with the convention that $0/0=0$ and $P(x,y)=0$ if $\{x,y\} \notin E_d$. 
For a fixed $\omega\in \Omega$, let  
$X = \{X_t, t\geq 0, \qP^x_\omega, x \in \Z^d\}$ be the 
continuous time random walk on $\Z^d$, with transition probabilities 
$P(x,y) = P_\omega(x,y)$, and exponential waiting times with mean $1/\mu_x$. 
The corresponding expectation will be denoted $\qE_\omega^x$. 
For a fixed $\omega\in \Omega$, the generator $\calL$ of $X$ is given by
\begin{align}\label{e:Ldef}
\calL f(x) = \sum_y \mu_{xy} (f(y) - f(x)).
\end{align}
In \cite{BD} this is called the {\em variable speed random walk} (VSRW)
among the conductances $\mu_e$.
This model, of a reversible (or symmetric) random walk in a random environment, is
often called the Random Conductance Model. 

We are interested in functional Central Limit Theorems (CLTs)
for the process $X$. Given any process $X$,
for $\eps>0$, set $X^{(\eps)}_t = \eps X_{t /\eps^2}$, $t\geq 0$. 
Let $\calD_T = D([0,T], \R^d)$ denote the Skorokhod space, 
and let $\calD_\infty=D([0,\infty), \R^d)$.
Write $d_S$ for the Skorokhod metric and $\calB(\calD_T)$ for the $\sigma$-field of 
Borel sets in the corresponding topology. 
Let $X$ be the canonical process on $\calD_\infty$ or $\calD_T$, $\PBM$ be Wiener 
measure on $(\calD_\infty, \calB(\calD_\infty))$  and let $\EBM$ be the 
corresponding expectation. 
We will
write $W$ for a standard Brownian motion.
It will be convenient to assume that $\{\mu_e\}_{e\in E_d}$ are 
defined on a probability space $(\Omega, \sF, \bP)$, and that
$X$ is defined on $(\Omega, \sF) \times (\calD_\infty, \calB(\calD_\infty))$ 
or  $(\Omega, \sF) \times (\calD_T, \calB(\calD_T))$. 
We also define the averaged or annealed measure $\bfP$ on 
$(\calD_\infty, \calB(\calD_\infty))$ or  $(\calD_T, \calB(\calD_T))$ by
\bes
 \bfP(G) = \bE P^0_\om(G). 
\ees

\begin{definition}\label{j1.2}
For a bounded function $F$ on $\calD_T$ and a constant matrix $\Sigma$, let 
$\Psi^F_\eps = \qE^0_\omega F(X^{(\eps)})$ and 
$\Psi^F_\Sigma = \EBM F(\Sigma W)$. 

\sm (i) We say that the {\em Quenched Functional CLT} (QFCLT) holds 
for $X$ with limit $\Sigma W$ if for every $T>0$ and 
every bounded continuous function $F$ on $\calD_T$ we 
have $\Psi^F_\eps \to \Psi^F_\Sigma$ as $\eps\to 0$, with $\Pp$-probability 1.\\
(ii) We say that the {\em Weak Functional CLT} (WFCLT) 
holds for $X$ with limit $\Sigma W$ if for every $T>0$ and every 
bounded continuous function $F$ on $\calD_T$ we have 
$\Psi^F_\eps \to \Psi^F_\Sigma$ as $\eps\to 0$, in $\Pp$-probability.\\
(iii) We say that the {\em Averaged (or Annealed) Functional CLT}
(AFCLT) holds for $X$ with limit $\Sigma W$ if for every $T>0$ and every 
bounded continuous function $F$ on $\calD_T$ we have 
$ \bE \Psi^F_\eps \to \Psi_{\Sigma}^F$.
This is the same as standard weak convergence with respect to the probability measure $\bfP$. 
\end{definition}

If we take $\Sigma$ to be non-random then since $F$ is bounded, it is
immediate that QFCLT $\Rightarrow$ WFCLT $\Rightarrow$ AFCLT.
In general for the QFCLT the matrix
$\Sigma$ might depend on the environment $\mu_\cdot(\om)$. However, if
the environment is stationary and ergodic, then $\Sigma$ is a shift invariant
function of the environment, so must be $\bP$--a.s. constant.

In \cite{DFGW} it is proved that if $\mu_e$ is a stationary ergodic 
environment with $\bE \mu_e<\infty$ then the WFCLT holds.
It is an open question as to whether the QFLCT holds under these hypotheses. 
For the QFCLT in the case of percolation see \cite{BeB, MP, SS}, and for the Random Conductance Model
with $\mu_e$ i.i.d see \cite{BP, M1, BD, ABDH}.
In the i.i.d. case the QFCLT holds (with $\sigma>0$)
for any distribution of $\mu_e$ provided $p_0=\bP(\mu_e=0) < p_c(\bZ^d)$.

\begin{definition}
We say an environment $(\mu_e)$ on $\bZ^d$ is {\em symmetric} if the law of  $(\mu_e)$ is 
invariant under symmetries of $\bZ^d$. 
\end{definition}

If $(\mu_e)$ is stationary, ergodic and symmetric, and the WFCLT holds with
limit $\Sigma W$ then the limiting covariance matrix $\Sigma^T \Sigma$ must also
be invariant under symmetries of $\bZ^d$, so must be a constant $\sigma\ge 0$
times the identity.

Our main result concerns the relation between the weak and quenched FCLT.

\begin{theorem}\label{T:main1}
Let $d=2$ and $p<1$.
There exists a symmetric stationary ergodic environment $\{\mu_e\}_{e\in E_2}$
with $\bE (\mu_e^p \vee \mu_e^{-p})<\infty$ 
and a sequence $\eps_n \to 0$ such that\\
(a)  the WFCLT holds for $X^{(\eps_n)}$ with limit $W$, \\
but \\
(b) the QFCLT does not hold for  $X^{(\eps_n)}$ with limit $ \Sigma W$ for any $\Sigma$.  
\end{theorem}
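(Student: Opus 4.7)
My plan is to construct $\{\mu_e\}_{e\in E_2}$ as a superposition of independent ``trap layers'' at a rapidly growing sequence of scales $L_n\uparrow\infty$, with $\eps_n:=1/L_n$. At each scale $n$, choose a uniform random offset $\tau_n\in\{0,\ldots,L_n-1\}^2$, partition $\bZ^2$ into disjoint $L_n$-blocks along $\tau_n+L_n\bZ^2$, and declare each block independently to be an \emph{$n$-trap block} with probability $q_n$. Inside an $n$-trap block, modify conductances on a sparse symmetric pattern of $O(L_n)$ edges -- for instance, one concentric annulus of low conductance $c_n$ around the block's center -- designed so that the effective resistance from the center to the outside of the annulus is large enough that the quenched exit time of $X$ from the interior exceeds $L_n^2$ with high probability. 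All other edges have $\mu_e=1$. The random shifts $\tau_n$ and the i.i.d.\ marking of blocks, together with the rotational/reflection symmetry of the annuli, make the resulting environment symmetric, stationary and ergodic. I will take $L_{n+1}\gg L_n$, $q_n=1/n$, and $c_n$ a suitable negative power of $L_n$; since modified edges form a set of density of order $q_n/L_n$ at scale $n$, taking $L_n$ sufficiently large will give $\bE(\mu_e^p\vee\mu_e^{-p})<\infty$ for the prescribed $p<1$.

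Let $A_n$ be the event ``the origin lies in the trapped interior of an $n$-trap block''; then $\bP(A_n)\asymp q_n\to 0$ while independence across $n$ and $\sum_n q_n=\infty$ give $\bP(A_n\text{ i.o.})=1$. For part (a) I argue as follows. On $A_n^c$ the scale-$n$ annulus is too far from the origin to affect $X$ up to rescaled time $T$; contributions from scales $m<n$ are handled by a \cite{DFGW}-style homogenization, using the small density of modified edges per scale, while for scales $m>n$ the walker's range lies inside a single $m$-block which is neutral with probability $1-q_m\to 1$. Combined with $\bP(A_n^c)\to 1$, this yields $\Psi^F_{\eps_n}\to\EBM F(W)$ in $\bP$-probability for every bounded continuous $F$. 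For part (b) I take $F:\calD_T\to\R$ defined by $F(\gamma):=1\wedge|\gamma(1)|^2$. On $A_n$, standard electrical-network and exit-time bounds imply that $X$ remains inside the scale-$n$ annulus up to time $1/\eps_n^2$ with high quenched probability, so $|X^{(\eps_n)}_1|=o(1)$ under $P^0_\om$ and hence $\Psi^F_{\eps_n}(\om)\to 0$. On $A_n^c$ the argument of (a) gives $\Psi^F_{\eps_n}(\om)\to \EBM F(W)>0$. Since both $A_n$ and $A_n^c$ occur for infinitely many $n$ almost surely, $\Psi^F_{\eps_n}(\om)$ has at least two distinct subsequential limits for $\bP$-a.e.\ $\om$, ruling out the QFCLT along $\eps_n$ with any limit $\Sigma W$.

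The main obstacle will be tuning the four sequences $L_n, q_n, c_n$, and (possibly) the number of nested annuli per trap block, to meet three competing demands simultaneously: (i) the quenched trap property on $A_n$, which forces $c_n L_n$ to be very small so that the exit time from the annulus interior exceeds $L_n^2$; (ii) the moment bound $\bE(\mu_e^p\vee\mu_e^{-p})<\infty$ for every $p<1$, which limits how small $c_n$ can be relative to the edge-density $q_n/L_n$ of modifications; and (iii) the WFCLT on $A_n^c$ with limit \emph{exactly} $W$, which may require using a balanced mixture of slow and fast annuli so that the homogenized effective diffusivity equals the identity (or alternatively, a deterministic time change). Carrying out (iii) rigorously -- showing that after averaging over the multi-scale perturbations the quenched law of $X^{(\eps_n)}$ converges in $\bP$-probability to Wiener measure -- is the technical heart of the argument and I expect it to occupy most of the work.
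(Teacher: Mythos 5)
Your overall architecture (multi-scale obstacles at scales $L_n$ with random offsets, obstacle probability per scale of order $1/n$ so that it tends to $0$ but is not summable, WFCLT in probability because the origin is usually far from the current-scale obstacle, QFCLT failure by Borel--Cantelli) is indeed the same skeleton as the paper's. But there is a genuine gap, and it sits exactly where you park it as ``the main obstacle'': making the weak limit equal to $W$ (or even any fixed nondegenerate $\Sigma W$). Your traps are purely slowing: at scale $n$ a fraction $q_n$ of blocks have a macroscopic interior effectively cut off by a low-conductance annulus. The requirement $\sum_n \bP(A_n)=\infty$ forces the trapped area fraction per scale to be of order $q_n$, and cutting out that area lowers the effective diffusivity of the truncated environment by order $q_n$ at each scale; since $\sum q_n=\infty$, the diffusivities $\sigma_n$ of the environments seen at scale $\eps_n$ degenerate (or at least you have no argument that they stay bounded away from $0$ and converge). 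Then the claimed convergence $\Psi^F_{\eps_n}\to\EBM F(W)$ in probability fails, and your part (b) dichotomy collapses as well, because a degenerate limit is no longer excluded: note that with $F(\gamma)=1\wedge|\gamma(1)|^2$ the trap behaviour is perfectly consistent with a QFCLT with $\Sigma=0$, so you must use (a) to rule that out -- which you cannot do without a nondegenerate weak limit. The two fixes you mention do not close this: a global deterministic time change (multiplying all conductances by a constant) only helps if you first prove the diffusivity converges to a strictly positive deterministic constant, which is precisely what is missing; and ``a balanced mixture of slow and fast annuli'' is not a tuning detail but the heart of the construction. The paper spends Theorem \ref{T:eK} on exactly this: each low-conductance obstacle is paired with a high-conductance bar, and $K_n$ is chosen, via the variational characterization \eqref{e:vp} of effective conductance together with the dual flow (energy-of-flows) lower bound, so that the periodic truncated environment has $\sigma_n=1$ at every scale. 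This compensation is also what forces $K_n\asymp b_n$, hence $\bE\mu_e=\infty$ and the sharpness of the moment condition $p<1$; your claim that the moments are handled ``by taking $L_n$ large'' would need to be revisited once the fast edges are added.

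Two smaller points. First, your mechanism for (b) is confinement (the walk cannot leave the annulus), whereas the paper uses a thin barrier the walk cannot cross within time $b_n^2$, disproving convergence via a directional crossing event of positive Wiener measure; the barrier version needs no appeal to part (a) to exclude degenerate limits, while yours does. Second, in (b) you assert that on $A_n^c$ ``the argument of (a) gives'' $\Psi^F_{\eps_n}(\om)\to\EBM F(W)$ for the fixed $\om$; part (a) only gives convergence in $\bP$-probability, so you must extract an a.s.\ convergent sub-subsequence and intersect with $\{A_n \text{ i.o.}\}$ -- fixable, but as written it conflates quenched and in-probability statements. The confinement estimate itself (quenched crossing probability $\lesssim c_n L_n\log(\cdot)$, with constants depending on the lower-scale environment, absorbed by choosing $c_n$ after the earlier scales) is sound and parallels the paper's Lemma \ref{L:dontcross}.
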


\begin{remark}
(1) Under the weaker condition that $\bE\mu_e^p<\infty$ and $\bE \mu_e^{-q}<\infty$ with 
$p<1$, $q<1/2$
we have the full WFCLT  for $X^{(\eps)}$ as $\eps\to 0$,
i.e., not just along a sequence $\eps_n$. However, the proof of this is very much harder
and longer than that of Theorem \ref{T:main1}(a). A sketch argument 
will be posted on the arxiv -- see \cite{BBT-A}. 
(Since our environment has  $\bE \mu_e = \infty$ we cannot use the results of \cite{DFGW}.)
We have chosen to use in this paper essentially the same environment as in \cite{BBT-A},
although for Theorem \ref{T:main1} a slightly simpler environment would have been sufficient. \\
(2) Biskup \cite{Bi} has proved that the QFCLT holds with $\sigma>0$ if $d=2$ and 
$(\mu_e)$ are symmetric and ergodic with $\bE( \mu_e \wedge \mu_e^{-1})<\infty$. \\
(3) See Remark \ref{R:Zd} for how our example can be adapted to $\bZ^d$ with $d\geq 3$; 
in that case we
have the same moment conditions as in Theorem \ref{T:main1}.\\
(4) A forthcoming paper by Andres, Deuschel and Slowik proves that the
QFCLT holds (in $\bZ^d$, $d\geq 2$) for stationary symmetric ergodic environments
$(\mu_e)$ under the conditions $\bE \mu_e^p <\infty$,
$\bE \mu_e^{-q}<\infty$, with $p^{-1}+q^{-1} <2/d$.
\end{remark}

Our second topic concerns the relation between the weak and averaged
FCLT. In general, of course, for a sequence of random variables $\xi_n$,
convergence of $\bE \xi_n$ does not imply convergence in probability.
However, under some hypotheses on the processes $X^{(n)}$ which
are quite natural in this context, we do find that the WFCLT and AFCLT  
are equivalent -- see Theorem \ref{j9.1}.

The remainder of the paper after Section 2 constitutes the proof of Theorem
\ref{T:main1}. The argument is split into several sections.  In the
proof, we will discuss the conditions listed in Definition \ref{j1.2}
for $T=1$ only, as it is clear that the same argument works for general $T>0$.

\sm {\bf Acknowledgment.}
We are grateful to Emmanuel Rio for very helpful advice, and Pierre Mathieu
and Jean-Dominique Deuschel for some very useful discussions.

\section{Averaged and weak invariance principles} 

As in the Introduction, let $(\Omega, \sF, \bP)$ be a probability space, fix some $T>0$ and let 
$\sD=\calD_T$ in this section (although we will also use $\calD_{2T}$).
Recall that $X$ is the coordinate/identity process on $\sD$. Let $C(\calD)$ be the family of all functions $F: \calD \to \R$ which are continuous in the Skorokhod topology. 

\begin{definition}
Probability measures $P^\om_n$ on $\sD$ {\em converge weakly in measure}
 to a probability measure $P_0$ on $\sD$ if for each bounded $F\in C(\sD)$,
\be \label{e:Pconv1}
 E^\om_n F(X) \to E_0 F(X) \hbox{ in  $\bP$ probability}.
 \ee
This definition is given in \cite{DFGW}. 
\end{definition}

Let $\delta_n \to 0$, let $\Lambda_n = \delta_n \bZ^d$, and let $\lambda_n$
be counting measure on $\Lambda_n$ normalized so that $\lam_n \to dx$
weakly, where $dx$ is Lebesgue measure on $\bR^d$.
Suppose that for each $\om$ and $n \ge 1$ we have Markov processes 
$X^{(n)}=(X_t, t\ge 0, P^x_{\om,n},  x \in \Lambda_n)$  with values in $\Lambda_n$. Write 
\bes
  P^{(\om, n)}_t f(x) = E^x_{\om,n} f( X_t)
\ees
for the semigroup of $X^{(n)}$. Since we are discussing weak convergence, it
is natural to put the index $n$ in the probability measures $P^x_{\om,n}$ rather
than the process; however we will sometimes abuse notation and refer to
$X^{(n)}$ rather than $X$ under the laws $(P^x_{\om,n})$. Recall that
$W$ denotes a standard Brownian motion.

\sms For the remainder of this section, we will suppose that the following Assumption holds. 

\begin{assumption} \label{a:xnpn}
 {\rm
(1)  For each $\om$, $P^{(\om, n)}_t$ is self adjoint on $L^2( \Lambda_n, \lam_n)$. \\
(2) The $\bP$ law of the `environment' for $X^{(n)}$ is stationary. More precisely,
for $x \in \Lambda_n$ there exist measure preserving maps  
$T_x : \Omega \to \Omega$ such that for all bounded measurable $F$ on $\sD_T$,
\begin{align} \label{e:tr1}
 E^x_{\om,n} F( X) &=  E^0_{T_x \om,n} F(  X+x) , \\
 \label{e:tr2}
  \bE E^0_{T_x \om,n} F( X) &= \bE E^0_{\om,n} F( X) .
 \end{align}
(3) The AFCLT holds, that is for all $T>0$ and bounded continuous $F$ on $\sD_T$,
\bes
\bE E^0_{\om,n} F(X) \to \EBM F(X).
\ees
} \end{assumption}

\def\omn{{(\om,n)}}

Given a function $F $ from $\sD_T$ to $\bR$ set
$$ F_x(w) = F(x+w), \q x \in \bR^d, w \in \sD_T.$$
Note that combining \eqref{e:tr1} and \eqref{e:tr2} we obtain
\bes
 \bE  E^x_{\om,n} F( X) = \bE E^0_{\om,n} F_x( X), \q x \in \Lambda_n.
\ees

\sms
Set
$$ \calP_t^n f(x) = \bE P^{\om,n}_t f(x). $$
Note that $\calP^{(n)}_t$ is not in general a semigroup. 
Write $K_t$ for the semigroup of Brownian motion on $\bR^d$.  Write also
\begin{align*}
   P^{(\om,n)} F(x) &= E^x_{\om,n} F(X), \\
  \calP^{(n)} F(x) &= \bE E^x_{\om,n} F(X), \\
 \sK F(x) &= E_{BM} F(x+W), \\
 U^\omn F(x) &= P^\omn F(x) - \sK F(x).
\end{align*}
Using this notation, the AFCLT states  that for $F \in C(\sD_T)$
\be \label{e:afclt}
 \calP^{(n)} F(0) \to \sK F(0).
 \ee

\begin{definition}
Fix $T>0$ and recall that $\sD=\sD_T$.
Write $d_U$ for the uniform norm, i.e.,
$$ d_U(w,w') = \sup_{0\le s\le T} | w(s)-w'(s)|. $$
Then $d_S(w,w') \le d_U(w,w')$, but the topologies given by the two metrics are
distinct.
\end{definition}

Let $\calM(\sD)$ be the set of measurable $F$ on $\sD$. A function
$F\in \calM(\sD)$ is uniformly continuous in the uniform norm on $\sD$ if there 
exists $\rho(\eps)$ with $\lim_{\eps \to 0} \rho(\eps) =0$ such that if $w, w' \in \sD_T$ with
$d_U(w,w')\le \eps$ then
\be \label{e:modcty}
 |F(w) -F(w') | \le \rho(\eps). 
 \ee
Write $C_U(\sD)$ for the set of  $F$ in $\calM(\sD)$ which are uniformly continuous 
in the uniform norm. Note that we  do not have $C_U(\sD) \subset C(\sD)$.

\sms 
Let $C^1_0(\bR^d)$ denote the set of continuously differentiable functions with compact support. 
Let $\sA_m$ be the set of $F$ such that 
\be \label{e:Fdef}
 F(w) = \prod_{i=1}^m f_i(w(t_i)),
\ee
where $0 \leq t_1 \leq \dots t_m \leq T$, $f_i \in  C^1_0(\bR^d)$, and let  $\sA = \bigcup_m \sA_m$. 

\begin{lemma} \label{L:uc}
Let $F \in \sA$. Then $F \in C_U(\sD)$. 
\end{lemma}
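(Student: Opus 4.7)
The statement is about pointwise-evaluation products, so the proof is essentially a telescoping estimate exploiting the $C^1_0$ regularity of the factors. The key observation is that each $f_i \in C^1_0(\bR^d)$ is both bounded and Lipschitz: set $M_i = \|f_i\|_\infty < \infty$ and let $L_i$ denote the Lipschitz constant, which is finite because $\nabla f_i$ is continuous and compactly supported. These constants depend only on $F$, not on $w,w'$.

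Given $w, w' \in \sD_T$ with $d_U(w,w') \le \eps$, the uniform bound implies $|w(t_i) - w'(t_i)| \le \eps$ for each of the finitely many times $t_1 < \dots < t_m$ appearing in \eqref{e:Fdef}. I would then apply the standard telescoping identity
\bes
 \prod_{i=1}^m a_i - \prod_{i=1}^m b_i
 = \sum_{k=1}^m \Bigl(\prod_{i<k} a_i\Bigr)(a_k - b_k)\Bigl(\prod_{i>k} b_i\Bigr)
\ees
with $a_i = f_i(w(t_i))$ and $b_i = f_i(w'(t_i))$. Bounding the $a_i$ and $b_i$ factors by $M_i$ and each difference $|a_k - b_k|$ by $L_k |w(t_k) - w'(t_k)| \le L_k \eps$ gives
\bes
 |F(w) - F(w')| \le \Bigl(\sum_{k=1}^m L_k \prod_{i \ne k} M_i\Bigr) \eps =: C_F \, \eps.
\ees
Thus the modulus $\rho(\eps) = C_F \eps$ works, verifying \eqref{e:modcty} and placing $F$ in $C_U(\sD)$.

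Since any $F \in \sA$ lies in $\sA_m$ for some $m$, the above argument covers the whole class. There is no real obstacle here: the only thing to check is that $C^1_0$ functions are Lipschitz, which follows from the mean value theorem applied to the continuous compactly supported gradient. No use of the Skorokhod structure is needed, which is consistent with the subsequent remark in the paper that $C_U(\sD) \not\subset C(\sD)$.
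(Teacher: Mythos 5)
Your proof is correct and is essentially the paper's own argument: the paper also bounds $|F(w)-F(w')|$ by a telescoping/Lipschitz estimate, simply using a single constant $C$ dominating all the $\|f_i\|_\infty$ and Lipschitz constants to get the bound $mC^m d_U(w,w')$. Your version with the individual constants $M_i$, $L_i$ is the same proof written out in slightly more detail.
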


\begin{proof} Let $f \in \sA_m$. Choose $C\ge 2$ so that $||f_i||_\infty \le C$ and
$|f_i(x)-f_i(y)| \le C|x-y|$ for all $x,y ,i$. Then 
$$ |F(w) - F(w')| \le  m C^{m} d_U(w,w'). $$
\end{proof}

\begin{lemma} \label{L:st1}
For all $F \in \calM(\sD)$, 
\begin{align}\label{j27.1}
P^{(\om,n)} F(x)  \, &\eqd \, P^\omn F_x(0), \\ 
U^{(\om,n)} F(x)  \, &\eqd \, U^\omn F_x(0). \nonumber
\end{align}
\end{lemma}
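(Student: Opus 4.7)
The plan is to reduce both distributional identities to a direct application of the stationarity assumption (2.3)--(2.4). For the first identity, I would start from the definition $P^\omn F(x) = E^x_{\om,n} F(X)$ and invoke the translation identity \eqref{e:tr1} to rewrite this as $E^0_{T_x \om,n} F(X+x)$. Since $F(X+x) = F_x(X)$ by the definition of $F_x$, this equals $P^{(T_x\om,n)} F_x(0)$. The measure-preserving property \eqref{e:tr2} of the map $T_x:\Omega\to\Omega$ then implies that, as a random variable in $\om$, $P^{(T_x\om,n)} F_x(0)$ has the same law as $P^\omn F_x(0)$, giving \eqref{j27.1}.

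For the second identity, the key observation is that the Brownian motion kernel $\sK$ is translation-invariant in exactly the sense needed:
\begin{equation*}
\sK F(x) = E_{BM} F(x+W) = E_{BM} F_x(W) = \sK F_x(0).
\end{equation*}
This quantity is deterministic (does not depend on $\om$), so it is invariant under the push-forward by $T_x$. Combining with the already-established identity for $P^\omn$, I get
\begin{equation*}
U^\omn F(x) = P^\omn F(x) - \sK F(x) = P^{(T_x\om,n)} F_x(0) - \sK F_x(0) = U^{(T_x\om,n)} F_x(0),
\end{equation*}
and another appeal to the measure-preserving property of $T_x$ yields $U^\omn F(x) \eqd U^\omn F_x(0)$.

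There is no genuine obstacle here; the lemma is essentially a bookkeeping statement that unpacks the stationarity Assumption \ref{a:xnpn}(2) in the notation $F_x$. The only point that requires a moment of attention is verifying that $\sK$ commutes with translation in the same way as the random semigroup, which is immediate because Brownian motion has translation-invariant transition probabilities. One should also note that the identities hold pointwise in $x\in\Lambda_n$ (for the first) and $x\in\bR^d$ (for the second), not jointly in $x$, so no continuity in $x$ is needed.
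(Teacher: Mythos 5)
Your proof is correct and follows essentially the same route as the paper: apply the stationarity identity \eqref{e:tr1} to write $P^\omn F(x)=E^0_{T_x\om,n}F(X+x)=P^{(T_x\om,n)}F_x(0)$, use that $T_x$ is measure preserving to get equality in distribution, and note that $\sK F(x)=\sK F_x(0)$ is deterministic so the statement for $U^\omn$ follows at once. The paper's proof is the same computation, merely leaving the $\sK$-translation-invariance step implicit with the phrase ``the result for $U^\omn$ is then immediate.''
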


\begin{proof} 
By the stationarity of the environment,
\bes  
  P^{(\om,n)} F(x) =  E^x_{\om,n }  F(X) =  E^0_{T_x \om,n }  F(X+x)
  =^{(d)}  E^0_{\om,n} F(X +x) =  P^{(\om,n)} F_x(0) .
 \ees
The result for $U^\omn$ is then immediate. 
\end{proof}

\begin{lemma} \label{L:uc2}
Let $F \in C_U(\sD_T)$. Then $ P^{(\om,n)} F_x(0)$, $ U^{(\om,n)} F_x(0)$,
and $\calP^{(n)} F(x)$ are uniformly continuous on $\Lambda_n$ for every $ n \in \bN $,
with a modulus of continuity which is independent of $n$.
\end{lemma}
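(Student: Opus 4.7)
The proof will be essentially a direct unpacking of the definitions, and the key observation is that translating a path in $\sD_T$ by a constant vector $x \in \bR^d$ changes paths additively in a way that interacts cleanly with the uniform norm: for any $w \in \sD_T$ and $x,y \in \bR^d$ one has $d_U(w+x, w+y) = |x-y|$. Since $F \in C_U(\sD)$ by hypothesis, there is a modulus $\rho$, independent of $n$ and $\om$, satisfying \eqref{e:modcty}, and this modulus is exactly what will control all three quantities.

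The plan is to handle $P^{(\om,n)} F_x(0)$ first. For any $x, y \in \Lambda_n$,
\[
 \bigl| P^{(\om,n)} F_x(0) - P^{(\om,n)} F_y(0) \bigr|
 = \bigl| E^0_{\om,n}\bigl[ F(X+x) - F(X+y) \bigr] \bigr|
 \le E^0_{\om,n} \rho(|x-y|) = \rho(|x-y|),
\]
where the pathwise estimate uses $d_U(X+x, X+y) = |x-y|$ and \eqref{e:modcty}. The bound depends only on $\rho$, hence is independent of $\om$ and $n$.

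For $\sK F(x) = \EBM F(x+W)$ the same one-line computation gives $|\sK F(x) - \sK F(y)| \le \rho(|x-y|)$, and since $\sK F_x(0) = \sK F(x)$, the identity $U^{(\om,n)} F_x(0) = P^{(\om,n)} F_x(0) - \sK F(x)$ together with the triangle inequality yields uniform continuity of $U^{(\om,n)} F_x(0)$ with modulus $2\rho$. Finally, by Lemma \ref{L:st1} the random variables $P^{(\om,n)} F(x)$ and $P^{(\om,n)} F_x(0)$ have the same $\bP$-law, so taking expectations gives $\calP^{(n)} F(x) = \bE \, P^{(\om,n)} F_x(0)$, and Jensen/triangle inequality applied to the first bound above produces $|\calP^{(n)} F(x) - \calP^{(n)} F(y)| \le \rho(|x-y|)$, again with a modulus independent of $n$.

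There is essentially no obstacle here; the proof is routine once one notices that the uniform-norm modulus is the right tool and that the translation $x \mapsto F_x$ only shifts paths by a constant vector, so the spatial regularity in $x$ inherits directly from the path-space regularity of $F$. No probabilistic or ergodic structure of the environment is needed beyond the measurability already built into Assumption \ref{a:xnpn}.
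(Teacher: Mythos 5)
Your proof is correct and follows essentially the same route as the paper: translate the path by $x$ versus $y$, note $d_U(w+x,w+y)\le|x-y|$ so the uniform-norm modulus $\rho$ of $F$ controls $P^{(\om,n)}F_x(0)$ (and likewise $\sK F_x(0)$, hence $U^{(\om,n)}F_x(0)$), and then pass to $\calP^{(n)}F(x)$ via the distributional identity of Lemma \ref{L:st1} and the triangle inequality under $\bE$. Nothing further is needed.
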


\begin{proof} 
If $|x-y| \le \eps$ then $d_U(w+x,w+y)\le \eps$, so if $F\in C_U(\sD_T)$ and 
$\rho$ is such that \eqref{e:modcty} holds, then
$|F_x(w)-F_y(w)| \le \rho(\eps)$, and hence
\begin{align*}
| P^{(\om,n)}_t F_x(0) -  P^{(\om,n)}_t F_y(0)| &=
| E^0_{\om,n} F( x + X) -  E^0_{\om,n} F( y + X) | \\
&\le E^0_{\om,n} | F( x + X) -  F( y + X) | \le  \rho(\eps). 
\end{align*}
This implies the uniform continuity of 
 $ P^{(\om,n)} F_x(0)$ and $ U^{(\om,n)} F_x(0)$.
By \eqref{j27.1}, 
\begin{align*}  
   \calP ^{(n)} F (x)
= \bE P^{(\om,n)} F(x)  = \bE P^\omn F_x(0), 
\end{align*}
so the uniform continuity of $\calP^{(n)} F(x)$ follows from that of
$ P^{(\om,n)} F_x(0)$. 
\end{proof}

\begin{lemma}
Let $F \in \sA$. Then
\be \label{e:con2}
  \calP^{(n)} F(x) \to \sK F(x) \hbox{ for all } x \in \bR^d. 
\ee
\end{lemma}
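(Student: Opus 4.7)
The plan is to combine two ingredients: first, Lemma~\ref{L:st1} reduces the claim to the single point $x=0$ by translation invariance (since $\sA$ is stable under spatial shifts); second, the AFCLT in Assumption~\ref{a:xnpn}(3) supplies convergence at $x=0$ for test functions continuous in the Skorokhod topology, which I would upgrade to arbitrary $F\in\sA$ via the portmanteau theorem. The main obstacle, handled by this second step, is that a typical $F\in\sA$ is built out of pointwise coordinate evaluations $w\mapsto w(t_i)$ and so is \emph{not} continuous on $\sD_T$ in the Skorokhod topology; the resolution exploits the fact that the limit measure $\PBM$ is supported on continuous paths.

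For the translation step, if $F(w)=\prod_{i=1}^m f_i(w(t_i))\in\sA_m$ and $x\in\bR^d$, then $F_x(w)=\prod_{i=1}^m f_i(x+w(t_i))$ also lies in $\sA_m$, since replacing $f_i$ by $f_i(\cdot+x)\in C_0^1(\bR^d)$ keeps us in the class. Taking $\bE$ in the distributional identity $P^\omn F(x)\eqd P^\omn F_x(0)$ from Lemma~\ref{L:st1} gives $\calP^{(n)}F(x)=\calP^{(n)}F_x(0)$ for $x\in\Lambda_n$, and trivially $\sK F(x)=\sK F_x(0)$. For a general $x\in\bR^d$ I would choose $x^{(n)}\in\Lambda_n$ with $x^{(n)}\to x$ and note that $\|F_{x^{(n)}}-F_x\|_\infty\to0$ by the uniform continuity (with compact supports) of the $f_i$, which forces $|\calP^{(n)}F_{x^{(n)}}(0)-\calP^{(n)}F_x(0)|\to0$; the uniform modulus from Lemma~\ref{L:uc2} then bridges $x^{(n)}$ and $x$ on the left-hand side. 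So it will suffice to prove $\calP^{(n)}G(0)\to\sK G(0)$ for every $G\in\sA$.

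For this last task, Assumption~\ref{a:xnpn}(3) asserts precisely that the annealed measures $\bP_n:=\bE P^0_{\om,n}$ converge weakly to Wiener measure $\PBM$ on $(\sD_T,\calB(\sD_T))$. The portmanteau theorem then extends this weak convergence automatically to every bounded measurable function whose Skorokhod discontinuity set is $\PBM$-null. For $G(w)=\prod_i f_i(w(t_i))\in\sA$, boundedness is immediate, and $G$ is Skorokhod-continuous at every $w$ continuous at all of $t_1,\dots,t_m$---this is the standard fact that the coordinate projection $w\mapsto w(t)$ is Skorokhod-continuous precisely at paths continuous at $t$. Since $\PBM$-almost every path is continuous, the discontinuity set of $G$ is $\PBM$-null, and the portmanteau theorem yields $\calP^{(n)}G(0)=\bP_n[G]\to\PBM[G]=\sK G(0)$. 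Combined with the translation step, this establishes \eqref{e:con2}.
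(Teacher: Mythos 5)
Your argument is correct and follows essentially the same route as the paper: the AFCLT is read as weak convergence of the annealed laws $\bE P^0_{\om,n}$ to $\PBM$, and since $\PBM$ is carried by continuous paths the coordinate evaluations (hence every $F\in\sA$, after the translation reduction to $x=0$) are $\PBM$-a.s.\ continuous, giving \eqref{e:con2}. The paper compresses exactly this into the statement that the finite-dimensional distributions converge; your version merely spells out the portmanteau step and the shift $F\mapsto F_x$ that the paper leaves implicit.
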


\begin{proof}
The AFCLT (in \ref{a:xnpn}) implies that $\bP \cdot P^0_{\om, n}$ converge weakly to $P_{BM}$.
Hence the finite dimensional distributions of $X^{(n)}$ converge
to those of $W$, and this is equivalent to \eqref{e:con2}. \end{proof}

Let $C_b(\bR^d)$ denote the space of bounded continuous functions on $\bR^d$.

\begin{lemma} \label{L:conv1}
Let $F \in \sA$, and $h \in C_b(\bR^d)\cap L^1(\bR^d)$. Then
\be \label{e:conv3}
   \int h(x) \calP^{(n)} F(x) \lam_n(dx) \to    \int h(x) \sK F(x) dx .
\ee
\end{lemma}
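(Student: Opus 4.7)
The plan is to upgrade the pointwise convergence $\calP^{(n)}F(x)\to\sK F(x)$ from the previous lemma to uniform convergence on compact subsets of $\bR^d$, and then combine this with the weak convergence $\lam_n\to dx$ via a truncation in $h$.

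Since $F\in\sA$, Lemma \ref{L:uc} gives $F\in C_U(\sD)$, so Lemma \ref{L:uc2} furnishes an equicontinuity modulus for the family $\{\calP^{(n)}F\}_n$ independent of $n$; moreover $F$ is bounded, so, setting $M:=\|F\|_\infty$, both $|\calP^{(n)}F|\le M$ and $|\sK F|\le M$ hold uniformly. Combined with pointwise convergence to the continuous limit $\sK F$, an Arzel\`a--Ascoli argument applied on each ball (with a diagonal extraction) yields uniform convergence of $\calP^{(n)}F$ to $\sK F$ on every compact $K\subset\bR^d$.

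Now fix $\eps>0$, pick $R$ so that $\int_{|x|>R}|h|\,dx<\eps$, and choose a smooth cutoff $\chi\in C_c(\bR^d)$ with $0\le\chi\le 1$, $\chi\equiv 1$ on $B_R$, and $\supp\chi\subset B_{2R}$. Decompose
\[
\int h\,\calP^{(n)}F\,d\lam_n-\int h\,\sK F\,dx = A_n+B_n+C_n,
\]
where
\begin{align*}
A_n &:= \int h\chi\,(\calP^{(n)}F-\sK F)\,d\lam_n, \\
B_n &:= \int h\chi\,\sK F\,d\lam_n - \int h\chi\,\sK F\,dx, \\
C_n &:= \int h(1-\chi)\,\calP^{(n)}F\,d\lam_n - \int h(1-\chi)\,\sK F\,dx.
\end{align*}
Uniform convergence on $\ol{B_{2R}}$ together with $\lam_n(B_{2R})\to|B_{2R}|<\infty$ gives $A_n\to 0$; the weak convergence $\lam_n\to dx$ applied to $h\chi\sK F\in C_c(\bR^d)$ gives $B_n\to 0$; and $|C_n|\le M\int|h|(1-\chi)\,d\lam_n+M\eps$.

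The main obstacle is controlling the tail $\int|h|(1-\chi)\,d\lam_n$ uniformly in $n$. I would handle it by writing
\[
\int|h|(1-\chi)\,d\lam_n = \int|h|\,d\lam_n - \int|h|\chi\,d\lam_n;
\]
the second summand converges to $\int|h|\chi\,dx$ by weak convergence applied to $|h|\chi\in C_c(\bR^d)$, and the first converges to $\int|h|\,dx$ by the Riemann-sum convergence that holds for $|h|\in C_b\cap L^1$ on the lattice $\Lambda_n=\delta_n\bZ^d$. Hence the tail converges to $\int|h|(1-\chi)\,dx<\eps$, so $\limsup_n |C_n|\le 2M\eps$; letting $\eps\to 0$ finishes the proof.
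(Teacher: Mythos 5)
Your overall strategy is the same as the paper's (the paper's proof is a one-liner: combine the pointwise convergence \eqref{e:con2} with the $n$-independent modulus of continuity from Lemma \ref{L:uc2}), and the first part of your argument — equicontinuity plus pointwise convergence giving uniform convergence of $\calP^{(n)}F$ to $\sK F$ on compacts, then the cutoff decomposition with $A_n,B_n\to0$ — is fine. The problem is the step you yourself single out as the main obstacle: the claim that $\int |h|\,d\lam_n \to \int |h|\,dx$ ``by Riemann-sum convergence for $|h|\in C_b\cap L^1$.'' This is false in general. Continuity plus Lebesgue integrability does not control lattice Riemann sums: take $d=1$, $h\ge 0$ continuous, equal to $1$ at every integer $k\ge 1$ and supported on tiny intervals around the integers with summable lengths. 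Then $h\in C_b\cap L^1$, but since the integers lie in $\delta_n\bZ$ whenever $1/\delta_n\in\bN$ (as for $\delta_n=\eps_n=1/b_n$ in this paper), $\int |h|\,d\lam_n=+\infty$ for every $n$; a variant with spikes placed on $\delta_n\bZ^d$ in annuli drifting to infinity defeats any prescribed sequence $\delta_n\to0$. Consequently your bound $|C_n|\le M\int|h|(1-\chi)\,d\lam_n+M\eps$ can be vacuous, and the tail is not controlled by the decay of $h$ alone. (What is true is the vague convergence $\int g\,d\lam_n\to\int g\,dx$ for $g\in C_c$, which is all the hypothesis ``$\lam_n\to dx$ weakly'' gives you; for noncompactly supported $g$ one needs something like direct Riemann integrability.)

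The correct way to kill the tail term uses the structure of $F\in\sA$ rather than $h$: since $f_1$ has compact support, say in a ball of radius $L$, one has $|\calP^{(n)}F(x)|\le C\,\bE P^0_{\om,n}\bigl(x+X_{t_1}\in\supp f_1\bigr)$, and hence, exchanging the order of integration, $\int_{\{|x|>R\}}|\calP^{(n)}F(x)|\,\lam_n(dx)\le C'\,\bE P^0_{\om,n}\bigl(|X_{t_1}|>R-L\bigr)$ (the inner lattice count of $\{x:|x|>R,\ x+X_{t_1}\in\supp f_1\}$ has $\lam_n$-measure bounded uniformly in $n$). The right-hand side is small for large $R$, uniformly in $n$: for each fixed $n$ it tends to $0$ as $R\to\infty$, and the AFCLT in Assumption \ref{a:xnpn} (weak convergence of the annealed laws to Wiener measure, applied to a continuous function sandwiched between indicators of $\{|w(t_1)|\ge R\}$ and $\{|w(t_1)|\ge R-1\}$, which is $\PBM$-a.s.\ continuous) bounds the $\limsup_n$ by a Gaussian tail. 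With this uniform tightness of the measures $|\calP^{(n)}F|\,d\lam_n$, your term $C_n$ is bounded by $\|h\|_\infty$ times a quantity small uniformly in $n$, plus $M\eps$, and the rest of your argument goes through. Alternatively, one can note that in the paper's applications $h$ is either compactly supported ($h=f_m$) or dominated by a Gaussian convolution of a compactly supported function ($h=\sK F$), for which the lattice tails really are small; but as a proof of the lemma for general $h\in C_b\cap L^1$, your tail argument as written does not work.
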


\begin{proof} 
This is immediate from   \eqref{e:con2} and the uniform continuity proved in Lemma \ref{L:uc2}. 
\end{proof}

\sms
The next Lemma gives the key construction in this section: using the self-adjointness
of $P^{(\om,n)}_t$ we can  linearise expectations of products. A similar idea
 is used in \cite{ZP} in the context of transition densities.
 
Let $F \in \sA_m$ be given by \eqref{e:Fdef}. Set $s_j=t_m-t_{m-j}$, and let
$$ \wh F(w)  = \prod_{j=1}^{m-1} f_{m-j}( w_{s_j})  \prod_{j=1}^m f_j(w_{t_m +t_j}). $$
Note that $\wh F$ is defined on functions $w \in \sD_{2T}$ (not $\sD_T$).
Write $\ip{f,g}_n$ for the inner product in $L^2(\lam_n)$ and $\ip{f,g}$ for the inner product in $L^2(\bR^d)$.

\begin{lemma} \label{L:reverse}
With $F$ and $\wh F$ as above, 
\begin{align} \label{e:Fadj} 
 \int ( P^{(\om,n)} F(x) )^2 \lam_n(dx) &= \int ( P^\omn \wh F(x) ) f_m(x) \lam_n(dx), \\
  \int (\sK F(x) )^2 dx &= \int ( \sK \wh F(x) ) f_m(x) dx. \label{j26.1}
\end{align}
\end{lemma}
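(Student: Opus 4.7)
The proof is an exercise in exploiting the self-adjointness of $Q_t := P^{(\om,n)}_t$ on $L^2(\lam_n)$ (Assumption \ref{a:xnpn}(1)) together with the semigroup property $Q_s Q_t = Q_{s+t}$. Writing $\Delta_i = t_i - t_{i-1}$ (with $t_0 = 0$) and $M_g$ for the operator of multiplication by $g$, iterated conditioning on $(X_{t_1}, \dots, X_{t_{m-1}})$ gives the operator expression
\begin{equation*}
P^\omn F = Q_{\Delta_1} M_{f_1} Q_{\Delta_2} M_{f_2} \cdots M_{f_{m-1}} Q_{\Delta_m} f_m.
\end{equation*}
I factor this as $P^\omn F = Q_{t_1} G$ with $G := M_{f_1} A f_m$ and $A := Q_{\Delta_2} M_{f_2} \cdots M_{f_{m-1}} Q_{\Delta_m}$. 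Since every $Q_{\Delta_i}$ is self-adjoint on $L^2(\lam_n)$ and every $M_{f_j}$ is self-adjoint (as multiplication by a real function), the adjoint is the reverse product $A^* = Q_{\Delta_m} M_{f_{m-1}} \cdots M_{f_2} Q_{\Delta_2}$.

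The heart of the argument is three consecutive applications of self-adjointness:
\begin{equation*}
\int (P^\omn F)^2 \, d\lam_n
= \langle Q_{t_1} G,\, Q_{t_1} G\rangle_n
= \langle G,\, Q_{2t_1} G\rangle_n
= \langle f_m,\, A^* M_{f_1} Q_{2t_1} M_{f_1} A f_m\rangle_n.
\end{equation*}
Then I verify that the operator $A^* M_{f_1} Q_{2t_1} M_{f_1} A$ applied to $f_m$ reproduces $P^\omn \wh F$. Expanding, this operator is the palindromic chain
\begin{equation*}
Q_{\Delta_m} M_{f_{m-1}} Q_{\Delta_{m-1}} \cdots Q_{\Delta_2} M_{f_1} Q_{2t_1} M_{f_1} Q_{\Delta_2} \cdots M_{f_{m-1}} Q_{\Delta_m},
\end{equation*}
and the list of successive time gaps $\Delta_m, \Delta_{m-1}, \dots, \Delta_2, 2t_1, \Delta_2, \dots, \Delta_m$ together with the function labels $f_{m-1}, \dots, f_1, f_1, \dots, f_m$ coincides with the iterated-conditioning expansion of $E^\cdot[\wh F(X)]$, because the times in $\wh F$ are $s_1, \dots, s_{m-1}, t_m + t_1, \dots, 2t_m$ with $s_j = t_m - t_{m-j}$ and $(t_m + t_1) - s_{m-1} = 2t_1$. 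This gives \eqref{e:Fadj}; identity \eqref{j26.1} follows by the identical manipulation with $Q_t$ replaced by the Brownian semigroup $K_t$, which is self-adjoint on $L^2(\bR^d, dx)$, and $\lam_n$ replaced by Lebesgue measure.

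The only genuine obstacle is combinatorial bookkeeping: confirming that the reversed half $A^*$, central piece $M_{f_1} Q_{2t_1} M_{f_1}$, and forward half $A$ splice together with no index shift into the Markov expansion of $P^\omn \wh F$. No further analytic input beyond self-adjointness and the semigroup property is needed.
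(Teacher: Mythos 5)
Your proposal is correct and follows essentially the same route as the paper: both arguments expand $P^\omn F$ via the Markov property into the chain $Q_{t_1}M_{f_1}Q_{t_2-t_1}\cdots M_{f_{m-1}}Q_{t_m-t_{m-1}}f_m$ and then use self-adjointness of the $Q_t$ (and of multiplication operators) to transfer all factors to one side of the inner product, recognizing the resulting palindromic chain as $P^\omn \wh F$ applied to $f_m$; your bookkeeping of the gaps $\Delta_m,\dots,\Delta_2,2t_1,\Delta_2,\dots,\Delta_m$ matches the paper's definition of $\wh F$. The only cosmetic difference is that you compress the paper's step-by-step transfer into a single adjoint $A^*$, and the Brownian case \eqref{j26.1} is handled identically in both.
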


\begin{proof}
Using the Markov property of $X^{(n)}$
$$ P^\omn F(x) = E^x_{\om,n}   \prod_{j=1}^m f_j(w_{t_j}) 
= E^x_{\om,n}  \Big( \prod_{j=1}^{m-1} f_j(w_{t_j}) P^\omn_{t_m-t_{m-1}} f_m(X_{t_{m-1}} )\Big). $$
Hence we obtain
\begin{align*}
P^\omn F(x) = P^\omn_{t_1} \left( f_1 P^\omn_{t_2-t_1} 
    \left(  f_2 \dots P^\omn_{t_m-t_{m-1}}  f_m(x) \dots \right)\right).
\end{align*}
Using the self-adjointness of $P^\omn_t$ gives
\begin{align*}
  \ip{  P^\omn F,  P^\omn F}_n 
  &= \ip{ P^\omn_{t_1} f_1 P^\omn_{t_2-t_1} f_2 \dots P^\omn_{t_m-t_{m-1}}  f_m,
   P^\omn_{t_1} f_1 P^\omn_{t_2-t_1} f_2 \dots P^\omn_{t_m-t_{m-1}}  f_m}_n \\
 &= \ip{f_1  P^\omn_{t_1} P^\omn_{t_1} f_1 P^\omn_{t_2-t_1} f_2 \dots P^\omn_{t_m-t_{m-1}}  f_m,
   P^\omn_{t_2-t_1} f_2 \dots P^\omn_{t_m-t_{m-1}}  f_m}_n.
\end{align*}
Continuing in this way we obtain
\begin{align*}
  &\ip{  P^\omn F,  P^\omn F}_n \\
&= \ip{  P^\omn_{t_m-t_{m-1}} f_{m-1}  P^\omn_{t_{m-1}-t_{m-2}} f_{m-2} \dots 
 f_1  P^\omn_{t_1} P^\omn_{t_1} f_1 \dots  P^\omn_{t_m-t_{m-1}}  f_m, f_m }_n  \\
 &= \ip{ P^\omn \wh F, f_m}_n.
\end{align*}

The proof for $\sK$ is exactly the same.
\end{proof}

\begin{lemma}
Let  $F \in \sA$. Then
\be \label{e:Ec2}
 \bE \int ( P^{(\om,n)} F(x) - \sK F(x) )^2 \lam_n(dx) \to 0. 
\ee
\end{lemma}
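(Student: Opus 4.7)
The plan is to expand the square and identify three integrals against $\lam_n$, each of which converges by the tools already assembled.

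Write
\begin{align*}
\bE \int ( P^{(\om,n)} F(x) - \sK F(x) )^2 \lam_n(dx)
&= \bE \int ( P^{(\om,n)} F(x) )^2 \lam_n(dx) \\
&\quad - 2 \int \calP^{(n)} F(x)\, \sK F(x)\, \lam_n(dx) \\
&\quad + \int ( \sK F(x) )^2 \lam_n(dx),
\end{align*}
where in the cross-term I have used Fubini together with the fact that $\sK F$ does not depend on $\om$, so that $\bE [P^{(\om,n)} F(x)] = \calP^{(n)} F(x)$.

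For the first (quadratic) term I would apply Lemma \ref{L:reverse} pointwise in $\om$: this gives $\int (P^\omn F(x))^2 \lam_n(dx) = \int P^\omn \wh F(x) f_m(x)\, \lam_n(dx)$, and taking $\bE$ turns this into $\int \calP^{(n)} \wh F(x) f_m(x)\, \lam_n(dx)$. Since $f_m \in C^1_0(\bR^d) \subset C_b \cap L^1$ and $\wh F \in \sA$, Lemma \ref{L:conv1} applies and the limit is $\int \sK \wh F(x) f_m(x) dx$, which by \eqref{j26.1} equals $\int (\sK F(x))^2 dx$. For the cross term, I would take $h = \sK F$ in Lemma \ref{L:conv1}; the output is $2\int (\sK F(x))^2 dx$. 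For the third term, $(\sK F)^2 \in C_b \cap L^1$ and the weak convergence $\lam_n \to dx$ yields the limit $\int (\sK F(x))^2 dx$. Summing with signs $+1,-2,+1$ gives $0$, which is the claim.

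The one mild technicality is checking that $\sK F \in C_b(\bR^d) \cap L^1(\bR^d)$, so that Lemma \ref{L:conv1} is indeed applicable to $h = \sK F$. Boundedness and continuity are clear from $F$ being a bounded continuous cylinder functional. For integrability, since each $f_i$ has compact support, say inside $B(0,R)$, the product $F(x+W)$ vanishes unless $x + W_{t_i} \in B(0,R)$ for every $i$; so $|\sK F(x)| \le \|F\|_\infty\, \bP(W_{t_1} \in B(-x,R))$, which decays as a Gaussian in $|x|$ and is in particular integrable. The same reasoning shows $(\sK F)^2 \in L^1$, which is what is needed for the third term.

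The main (and essentially the only) obstacle is the first term: the quantity $\bE \int (P^{(\om,n)} F)^2 d\lam_n$ is a non-linear functional of the environment and cannot be handled by the AFCLT directly. The linearisation via self-adjointness provided by Lemma \ref{L:reverse} is the crucial trick that makes the argument work, because it converts a second moment into a first moment of $P^\omn \wh F$ paired with $f_m$, to which Lemma \ref{L:conv1} and the AFCLT then apply. Once that trick is in hand the rest is bookkeeping.
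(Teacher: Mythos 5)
Your proposal is correct and follows essentially the same route as the paper: expand the square, handle the quadratic term by the self-adjointness linearisation of Lemma \ref{L:reverse} followed by Lemma \ref{L:conv1} applied to $\wh F$ with $h=f_m$, handle the cross term with $h=\sK F$ in Lemma \ref{L:conv1}, and use continuity (and decay) of $\sK F$ for the last term. Your extra check that $\sK F\in C_b(\bR^d)\cap L^1(\bR^d)$ via the compact supports of the $f_i$ is a welcome detail that the paper leaves implicit.
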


\begin{proof}
We have 
\begin{align} \nn
\int ( &P^{(\om,n)}F (x)- \sK F(x) )^2  \lam_n(dx) 
 = \ip{ ( P^{(\om,n)}F - \sK F), ( P^{(\om,n)}F - \sK F) }_n \\
 \nn
 &= \ip{  P^{(\om,n)}F,  P^{(\om,n)}F}_n - 
 2 \ip{  P^{(\om,n)}F,  \sK F}_n + \ip{ \sK F ,  \sK F}_n.
\end{align}
Thus
\begin{align} \nn
 \bE \int &( P^{(\om,n)}F(x) - \sK F(x) )^2 \lam_n(dx) \\
\label{e:Pdec}
  &= \bE \ip{ P^{(\om,n)}F,  P^{(\om,n)}F}_n - 
 2 \ip{  \calP^{(n)}F, \sK F}_n + \ip{\sK F,  \sK F }_n. 
  \end{align}
Since $\sK F$ is continuous  we have 
$$   \ip{ \sK F, \sK F  }_n \to  \ip{ \sK F ,  \sK F}. $$
Taking $h= \sK F $ Lemma \ref{L:conv1} gives that
$$  \ip{  \calP^{(n)} F, \sK F}_n \to  \ip{ \sK F , \sK F}. $$
Let $f_m$ and $\wh F$ be as in the the previous lemma.  Then
\bes
 \bE \ip{ P^{(\om,n)}F,  P^{(\om,n)}F}_n =\bE \ip{ P^\omn \wh F, f_m}_n  
  =  \ip{ \calP^{(n)}  \wh F, f_m}_n.
\ees
Again by Lemma \ref{L:conv1} and \eqref{j26.1},
$$    \ip{ \calP^{(n)}  \wh F, f_m}_n  \to  \ip{ \sK \wh F, f_m} = \ip{\sK F, \sK F} . $$
Adding the limits of the three terms in \eqref{e:Pdec}, we obtain \eqref{e:Ec2}. 
\end{proof}

\begin{lemma} \label{L:n1}
Let $F \in \sA$. Then
\be \label{e:cp0}
  P^{(\om,n)} F(0) \to \sK F(0) \hbox{ in $\bP$-probability. }
\ee
\end{lemma}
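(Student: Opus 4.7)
The plan is to combine the $L^2$ convergence established in the previous lemma with an averaging argument around $0$, using the uniform continuity in $x$ furnished by Lemma \ref{L:uc2}. The point is that the previous lemma gives convergence in a sense that is averaged over $x$, whereas we want convergence at the single point $x=0$.

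First I would exploit stationarity. By Lemma \ref{L:st1}, $P^{(\om,n)} F(x) \eqd P^\omn F_x(0)$ for each $x$, and since $\sK F(x)$ is deterministic,
\bes
\bE |P^{(\om,n)} F(x) - \sK F(x)|^2 = \bE |P^\omn F_x(0) - \sK F(x)|^2.
\ees
Fix $r>0$. Because $\lam_n \to dx$ weakly, the indicator $\I_{B(0,r)}$ is bounded and the previous lemma gives
\bes
\bE \int_{B(0,r)} |P^\omn F_x(0) - \sK F(x)|^2 \lam_n(dx)
 = \bE \int_{B(0,r)} |P^{(\om,n)} F(x) - \sK F(x)|^2 \lam_n(dx) \to 0.
\ees

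Next I would transfer this $L^2$ convergence on $B(0,r)$ to pointwise convergence at $0$ by using uniform continuity in $x$. Fix $\eps>0$. By Lemma \ref{L:uc} we have $F \in C_U(\sD_T)$, so Lemma \ref{L:uc2} supplies a modulus $\rho$, independent of $\om$ and $n$, for $x \mapsto P^\omn F_x(0)$. Choosing $\delta>0$ so small that $\rho(\delta) \le \eps$ and also $|\sK F(x) - \sK F(0)| \le \eps$ whenever $|x|\le \delta$, we get, for every $x \in B(0,\delta)\cap \Lambda_n$ and every $\om$,
\bes
|P^\omn F(0) - \sK F(0)| \le |P^\omn F_x(0) - \sK F(x)| + 2\eps,
\ees
where we used $P^\omn F_0(0) = P^\omn F(0)$. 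Squaring and averaging against $\lam_n$ restricted to $B(0,\delta)$ gives
\bes
|P^\omn F(0) - \sK F(0)|^2 \, \lam_n(B(0,\delta))
 \le 2 \int_{B(0,\delta)} |P^\omn F_x(0) - \sK F(x)|^2 \lam_n(dx) + 8\eps^2 \lam_n(B(0,\delta)).
\ees

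Taking $\bE$ of both sides and applying the displayed convergence from the first paragraph (with $r=\delta$), the first term on the right tends to $0$. Since $\lam_n(B(0,\delta)) \to |B(0,\delta)|>0$, it follows that
\bes
\limsup_n \bE |P^\omn F(0) - \sK F(0)|^2 \le 8\eps^2.
\ees
Letting $\eps \to 0$ gives $L^2(\bP)$ convergence, hence convergence in $\bP$-probability, which is \eqref{e:cp0}. The one subtlety I expect to require care is keeping straight the two different random functions $x\mapsto P^\omn F(x)$ (whose $L^2$ in $x$ goes to $\sK F$ in mean) and $x\mapsto P^\omn F_x(0)$ (which is uniformly continuous in $x$ with a deterministic modulus); they are equal in distribution at each $x$ by Lemma \ref{L:st1}, and that is exactly what lets one switch between them under $\bE$.
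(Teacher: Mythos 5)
Your argument is correct and is essentially the paper's proof: the paper likewise takes the $L^2$ statement $\bE\int (U^\omn F(x))^2\,\lam_n(dx)\to 0$, transfers it via Lemma \ref{L:st1} to $\bE\int (U^\omn F_x(0))^2\,\lam_n(dx)\to 0$, and then invokes the uniform (in $\om$ and $n$) modulus of continuity of $x\mapsto U^\omn F_x(0)$ from Lemma \ref{L:uc2} to pass to the point $x=0$. Your write-up simply spells out the last step (comparing $x=0$ with nearby $x$ on a small ball and using $\lam_n(B(0,\delta))\to |B(0,\delta)|>0$), which the paper leaves implicit.
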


\begin{proof}
The previous lemma gives
$$ \bE \int ( U^\omn F(x))^2  \lam_n(dx) \to 0. $$
Using Lemma \ref{L:st1} we have
\be \label{e:ufx}
 \bE \int ( U^\omn F_x(0) )^2  \lam_n(dx) \to 0,
 \ee
 and using the uniform continuity of $U^\omn F_x(0)$ gives \eqref{e:cp0}. 
\end{proof}

\sms Write $\bD$ for the set of dyadic rationals.

\begin{proposition} \label{P:fddsub}
Given any subsequence $(n_k)$ 
there exists a subsequence $(n'_k)$ of $(n_k)$  and a set $\Omega_0$
with $\bP(\Omega_0)=1$,  such that for any $\om \in \Omega_0$ and 
$q_1 \le q_2 \le \dots \le q_m$ with $q_i \in \bD$, the r.v.
$(X_{q_i}, i=1, \dots ,m)$ under $P^0_{\om,n'_k}$ converge in
distribution to $(W_{q_i}, i=1, \dots, m)$.
\end{proposition}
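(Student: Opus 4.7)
My plan is to promote the in-probability convergence of Lemma \ref{L:n1} to almost sure convergence along a subsubsequence, simultaneously for a countable determining family of test functions, and then use uniform approximation to deduce weak convergence of finite-dimensional distributions.

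First, I would fix a countable set $\calG\subset C^1_0(\bR^d)$ that is dense in $C_0(\bR^d)$ under the uniform norm; this is possible because $C^1_0(\bR^d)$ is uniformly dense in $C_0(\bR^d)$ and $C_0(\bR^d)$ is separable. Next, let $\sA_0 \subset \sA$ be the countable collection of functionals
\[
 F(w) = \prod_{i=1}^{m} f_i(w(q_i)),
\]
with $m\in\bN$, $0 \le q_1\le \dots\le q_m\le T$ in $\bD$, and $f_i\in\calG$. Enumerate $\sA_0 = \{F^{(\ell)}\}_{\ell\ge 1}$.

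Second, apply Lemma \ref{L:n1} to each $F^{(\ell)}$: along the given subsequence $(n_k)$,
$P^{(\om,n_k)} F^{(\ell)}(0) \to \sK F^{(\ell)}(0)$ in $\bP$-probability. By a standard diagonal extraction --- convergence in probability implies a.s.\ convergence along a subsubsequence --- we can pass to a single subsequence $(n'_k)$ of $(n_k)$ and a set $\Omega_0\subset \Omega$ with $\bP(\Omega_0)=1$ such that
\[
 P^{(\om, n'_k)} F^{(\ell)}(0) \to \sK F^{(\ell)}(0), \qquad \hbox{for every } \ell \text{ and every } \om\in\Omega_0.
\]
This step is routine but requires that the family $\sA_0$ be countable, which is why we quantize both the times and the test functions.

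Third, I would fix $\om\in\Omega_0$ and any rational times $q_1\le\dots\le q_m$ in $\bD$, and show that the law of $(X_{q_1},\dots,X_{q_m})$ under $P^0_{\om,n'_k}$ converges weakly on $\bR^{dm}$ to that of $(W_{q_1},\dots,W_{q_m})$. For any $g_1,\dots,g_m\in C_0(\bR^d)$, approximate each $g_i$ uniformly by elements of $\calG$; since the integrands $\prod g_i(X_{q_i})$ are uniformly bounded by $\prod\|g_i\|_\infty$ and depend continuously (in sup norm of each factor) on the $g_i$, the convergence established in the previous step extends to
\[
 \bE^0_{\om, n'_k} \prod_{i=1}^{m} g_i(X_{q_i}) \;\longrightarrow\; \bE_{BM}\prod_{i=1}^{m} g_i(W_{q_i}).
\]
Product functionals with $g_i\in C_0(\bR^d)$ form, via Stone--Weierstrass and linear combinations, a uniformly dense subset of $C_0(\bR^{dm})$, so this gives vague convergence of the finite-dimensional distributions. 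Since the limit is a probability measure, vague convergence upgrades to weak convergence, which is the desired conclusion.

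The main obstacle is really just the bookkeeping: making sure the null set along which the diagonal extraction might fail is the same for all choices of $(m, q_1,\dots,q_m, f_1,\dots,f_m)$, which is why the countability of $\sA_0$ is essential. The approximation from $\calG$ to $C_0(\bR^d)$ and the passage from products to general continuous bounded functions on $\bR^{dm}$ are standard once the almost-sure convergence on the countable family is in hand.
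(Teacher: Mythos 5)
Your proof is correct and follows essentially the same route as the paper: Lemma \ref{L:n1} applied to a countable family of product functionals in $\sA$ with dyadic times, a diagonal extraction upgrading convergence in probability to almost sure convergence along one subsequence valid on a single full-measure set, and the fact that such product functionals determine convergence of the finite-dimensional laws. The only difference is organizational: you run a single diagonalization over the whole countable family and spell out the determining-class step (countable dense $\calG$, Stone--Weierstrass, and the vague-to-weak upgrade), whereas the paper uses nested diagonalizations over test functions and then over finite dyadic time sets and leaves that approximation step implicit.
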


\begin{proof} Let  $\bD_T=[0,T] \cap \bD$.
Fix a finite set $q_1 \le  \dots \le q_m$ with $q_i \in \bD_T$. 
Then convergence of $(X_{q_i}, i=1, \dots ,m, P^0_{\om,n} )$ is determined
by a countable set of functions $F_i \in \sA_m$. So by  Lemma \ref{L:n1} we can find nested 
subsequences $(n^{(i)}_k)$ of  $(n_k)$ such that for each $i$
$$  \lim_{k \to \infty} P^0_{(\om,n^{(i)}_k)} F_j(0) = \sK F_j(0) \qquad
\hbox{ $\bP$-a.s., for $1\le j \le i$}. $$
A diagonalization argument then implies that there exists a subsequence $n''_k$ such that 
$(X_{q_i}, i=1, \dots, m, P^0_{\om,n''_k} )$ converge in distribution to $(W_{q_i}, i=1, \dots, m)$.
Since the set of the finite sets $\{q_1, \dots, q_m\}$ is countable, 
an additional diagonalization argument
then implies that there exists a subsequence $(n'_k)$ such that this
convergence holds for all such finite sets. 
\end{proof}

\begin{theorem}\label{j9.1}
Suppose that Assumption \ref{a:xnpn} holds, and that in addition $\bP$-a.s., 
$$ \{ X, P^0_{\om,n}, n \ge 1\} \hbox{ is relatively compact}. $$
Then $(X, P^0_{\om,n})$ converge weakly in measure to Brownian motion.
\end{theorem}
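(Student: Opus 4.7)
The plan is to leverage the finite-dimensional convergence already obtained in Proposition \ref{P:fddsub} together with the additional tightness assumption to upgrade to full weak convergence in $\sD_T$ along subsequences, and then convert the resulting $\bP$-a.s.\ subsequential convergence into convergence in $\bP$-probability via the standard subsequential criterion.

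Fix a bounded continuous $F$ on $\sD_T$. The goal is $P^\omn F(0) \to \sK F(0)$ in $\bP$-probability, so it suffices to show that every subsequence $(n_k)$ admits a further subsequence $(n'_k)$ along which this convergence holds $\bP$-a.s. Given $(n_k)$, I would apply Proposition \ref{P:fddsub} to produce a further subsequence $(n'_k)$ and a full-measure set $\Omega_0$ on which the finite-dimensional distributions of $X$ at dyadic times under $P^0_{\om,n'_k}$ converge to those of $W$. Let $\Omega_1$ be the full-measure set on which $\{X, P^0_{\om,n}, n \ge 1\}$ is relatively compact (equivalently, tight in $\sD_T$ by Prokhorov), and set $\Omega^\ast = \Omega_0 \cap \Omega_1$, which still has full measure.

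Fix $\om \in \Omega^\ast$. By tightness, every sub-subsequence of $(P^0_{\om,n'_k})$ admits a weakly convergent sub-sub-subsequence with limit some probability measure $Q_\om$ on $\sD_T$. Weak convergence in $\sD_T$ implies convergence of fdds at continuity points of the limit; since the dyadic times are continuity points of any continuous-path limit, $Q_\om$ must share the fdds of $\PBM$ at all dyadic times. Because $\PBM$ is concentrated on continuous paths, its law is determined by its fdds on any dense subset of $[0,T]$, so $Q_\om = \PBM$. Uniqueness of the subsequential limit combined with tightness therefore forces $P^0_{\om,n'_k} \Rightarrow \PBM$ in $\sD_T$, and in particular $E^0_{\om,n'_k} F(X) \to \EBM F(W)$. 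Since this holds for every $\om \in \Omega^\ast$, we have the desired $\bP$-a.s.\ convergence along $(n'_k)$.

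The main technical point is the uniqueness step: fdd convergence on a dense set of times, combined with tightness, pins down the weak limit in $\sD_T$ only because the limit is supported on continuous paths; for a generic c\`adl\`ag limit one would need to test fdds off a countable exceptional set. The rest of the argument is bookkeeping: the diagonal extraction needed to produce $(n'_k)$ is already supplied by Proposition \ref{P:fddsub}, and the passage from $\bP$-a.s.\ subsequential convergence to convergence in $\bP$-probability is the standard ``every subsequence has a further subsequence'' criterion.
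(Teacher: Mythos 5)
Your overall route is the same as the paper's: fix a bounded continuous $F$, reduce convergence in $\bP$-probability to producing, inside every subsequence, a further subsequence along which $E^0_{\om,n'_k}F(X)\to \EBM F(W)$ holds $\bP$-a.s., and obtain that subsequence from Proposition \ref{P:fddsub} together with the assumed relative compactness. (The paper phrases this as a proof by contradiction, which is the same subsequence criterion, and then closes the argument by citing \cite[Thm III.7.8]{EK}.)

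The one step you argue by hand is exactly the step the paper delegates to that citation, and as written it is circular. Having extracted a weakly convergent sub-sub-subsequence with limit $Q_\om$, you claim its finite-dimensional distributions at dyadic times agree with those of $\PBM$ ``since the dyadic times are continuity points of any continuous-path limit.'' But at that stage nothing tells you that $Q_\om$ is supported on continuous paths: it is an arbitrary subsequential limit in $\sD_T$, and weak convergence in the Skorokhod topology yields convergence of finite-dimensional distributions only at times $t$ with $Q_\om\big(w(t)=w(t-)\big)=1$. A priori $Q_\om$ could have fixed discontinuities, possibly located at dyadic times; since $\bD$ is countable, deleting the (countable) set of fixed jump times of $Q_\om$ from it can destroy density, so you cannot immediately identify $Q_\om$ with $\PBM$ on a dense set of times and finish by right-continuity. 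The statement that relative compactness plus finite-dimensional convergence on a dense set of times, to the finite-dimensional laws of a process with right-continuous paths, forces weak convergence is precisely the content of \cite[Thm III.7.8]{EK}; either cite it, as the paper does, or add the extra argument handling possible fixed jumps of the subsequential limit at dyadic times. With that repair, the remaining ingredients of your write-up (the subsequence criterion for convergence in probability, the intersection of the two full-measure sets, and the fact that a law on $\sD_T$ is determined by its finite-dimensional distributions along the dyadics once those are known to match) are correct.
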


\begin{proof}
Let $F \in C(\sD_T)$. If \eqref{e:Pconv1} fails, then there exists $\eps>0$ and a
subsequence $(n_k)$ such that
\be \label{e:noconv}
 \bP( | E^0_{\om, n_k} F(X) - E_{BM} F(W) | > \eps ) > \eps 
\hbox{ for all } k \ge 1. 
\ee
If $(n'_k)$ is the subsequence given by Proposition \ref{P:fddsub} then by
\cite[Thm III.7.8]{EK} we have $X^{(n'_k)} \Rightarrow W$, $\bP$-a.s.,
which contradicts \eqref{e:noconv}. 
\end{proof}

\sms We conclude the section with an example which shows the difficulties
involved in proving tightness for the laws $P^0_{\om,n}$. 

\begin{example} 
{\rm 
Let $T=1$, and let $\delta_n \downarrow 0$ be strictly decreasing. For $x =x(\cdot )\in \sD_1$
recall the definition of the oscillation function $w'(x,\delta)= w'(x,\delta,1)$ from
\cite[Chapter III]{EK}. Let  
\begin{align*} 
G_1 &= \{x \in \sD_1: w'(x, \delta_1) \le 1\}, \\
G_n &= \{x \in \sD_1: w'(x, \delta_{n-1}) >1, w'(x, \delta_{n}) \le 1 \}, \qquad n\geq 2.
\end{align*}
So $(G_n)_{n\geq 1}$ form a partition of $\sD_1$, and $p_n := P_{BM}(G_n)>0$ for each $n\geq 1$. 
Define probability measures on $\sD_1$ by
$$ Q_n( H) = \PBM (H \mid G_n). $$
Now let $(\Omega, \sF, \bP)$ be a probability space carrying i.i.d.r.v.
$\xi_j$ with $\bP(\xi_j=n) =p_n$ for all $n\ge 1$, $j\ge 1$.
Set $F_{j,n} = \{ \xi_j =n \}$, and define 
\bes
 P_{\om,n} = Q_{\xi_n(\om)}.
\ees
It is easy to verify that the averaged or annealed laws $P_n = \bP \cdot P_{\om,n}$ 
all equal $P_{BM}$, so that the AFCLT holds. However, with 
$\bP$-probability one, the laws $P_{\om,n}$ are not tight.
}
\end{example}

\section{Construction of the environment}\label{const}  

The remainder of this paper is concerned with the proof of Theorem \ref{T:main1}. 
Let $\Omega = (0,\infty)^{E_2}$, and $\calF$ be the Borel $\sigma$-algebra defined 
using the usual product topology. Then every $t\in\Z^2$ defines a transformation 
$T_t (\omega)=\omega +t$ of $\Omega$. Stationarity and ergodicity of the measures 
defined below will be understood with respect to these transformations. 

All constants (often denoted $c_1, c_2$, etc.) are assumed to be strictly positive and finite.
For a set $A \subset \bZ^2$ let $E(A)$ be the set of edges in
$A$ regarded as a subgraph of $\bZ^2$. Let $E_h(A)$ and $E_v(A)$ respectively
be the set of horizontal and vertical edges in $E(A)$.
Write $x \sim y$ if $\{x,y\}$ is an edge in $\bZ^2$. Define the exterior boundary of $A$ by
$$ \pd A =\{ y \in \bZ^2 -A: y \sim x \text{ for some } x \in A \}. $$
Let also
$$ \pd_i A = \pd(\bZ^2 -A). $$ 
Finally define balls in the $\ell^\infty$ norm by $B_\infty(x,r)= \{y: ||x-y||_\infty \le r\}$; of 
course this is just the square with center $x$ and side $2r$.

Let $\{a_n\}_{n\geq 0}$,  $\{ \beta_n\}_{n \ge 1}$ and $\{b_n\}_{n\geq 1}$ be  
strictly increasing sequences of positive integers growing to infinity with $n$,
with 
$$ 1=a_0 < b_1 < \beta_1 < a_1 \ll b_2 <  \beta_2<   a_2 \ll b_3 \dots $$
We will impose a number of conditions on these sequences in the course
of the paper. We collect these conditions here so that the reader 
can check that all conditions can be satisfied simultaneously.
There is some redundancy in the conditions, for easy reference.
(Some additional conditions on $b_n/a_{n-1}$ are needed for the proof in
\cite{BBT-A} of the full WFCLT for $(X^{(\eps)})$.)
\begin{enumerate}[(i)]
\item  $a_n$ is even  for all $n$. 
\item For each $n \ge 1$, $a_{n-1}$ divides $b_n$, 
and $b_n$ divides $\beta_n$ and $a_n$. 
\item  $b_1 \geq 10^{10}$.
\item  $a_n/\sqrt{2n} \le     b_n \le a_n /  \sqrt{n} $ for all $n$, and
$b_n \sim a_n/\sqrt{n}$.
\item $b_{n+1} \ge 2^n b_n$ for all $n$.
\item $b_n > 40 a_{n-1}$ for all $n$.
\item $b_n$ is large enough so that \eqref{e:an-choose1}
and \eqref{e:bncond1} hold.
\item $100b_n < \beta_n \le  b_n n^{1/4}  < 3 \beta_n <   a_n/10$ for $n$ large enough.
\end{enumerate}
These conditions do not define $a_n$'s and $b_n$'s  uniquely.
It is easy to check that there exist constants that 
satisfy all the conditions: if $a_i,b_i,\beta_i$ have been chosen for all $i\in\{1,\ldots, n-1\}$, then 
if $b_n$ is chosen large enough (with care on respecting the divisibility condition in (ii)), it will 
satisfy all the conditions imposed on it with respect to constants of smaller indices. Then 
one can choose $a_n$ and $\beta_n$ so that the remaining conditions are satisfied.

We set
\be \label{e:mndef}
 m_n = \frac{a_n}{a_{n-1}}, \qquad 
\ell_n = \frac{a_n}{b_n}.
\ee

\smallskip\noindent
We begin our construction by defining a collection of squares in $\bZ^2$. Let
\begin{align*} 
B_n &= [0, a_n]^2,  \\
B_n' &= [0, a_n-1]^2 \cap \bZ^2 ,\\
\calS_n(x) &= \{ x + a_n y + B_n': \, y \in \bZ^2 \}.
\end{align*} 
Thus $\sS_n(x)$ gives a tiling of $\bZ^2$ by disjoint squares of side $a_n-1$
and period $a_n$.
We say that the tiling $\calS_{n-1}(x_{n-1})$ is a refinement
of $\calS_n(x_n)$ if every square $Q \in \calS_n(x_n)$ is a finite
union of squares in $\calS_{n-1}(x_{n-1})$. It is clear that 
$\calS_{n-1}(x_{n-1})$ is a refinement of $\calS_n(x_n)$ if
and only if $x_n = x_{n-1}+ a_{n-1}y$ for some $y \in \bZ^2$.

Take $\sO_1$ uniform in $B'_1$, and for $n\geq 2$
take $\sO_n$, conditional on $(\sO_1, \dots, \sO_{n-1})$, 
to be uniform in $B'_n \cap ( \sO_{n-1} + a_{n-1}\bZ^2)$. We now define random tilings by letting
\bes
 \sS_n = \sS_n( \sO_n), \, n \ge 1.  
\ees

Let $\eta_n$, $K_n$ be positive constants; we will have $\eta_n \ll 1 \ll K_n$.
We define conductances  on $E_2$ as follows. 
Recall that $a_n$ is even, and let $a_n' = \frac12 a_n$. Let
$$ C_n = \{ (x,y) \in B_n \cap \bZ^2: y \ge x, x+y \le a_n \}. $$
We first define conductances $\nu^{0,n}_e$ for $e \in E(C_n)$. Let
\begin{align*}
D_n^{00} &= \big\{ (a'_n - \beta_n,y), a'_n - 10 b_n \le y \le a'_n + 10 b_n \big\},  \\
D_n^{01} &= \big\{ (x, a'_n + 10 b_n),  (x, a'_n + 10 b_n + 1), (x, a'_n - 10 b_n),  (x, a'_n - 10 b_n -1),   \\
\nonumber   
   & \q \q \q a'_n -\beta_n -b_n \le x \le a'_n -\beta_n + b_n \big\}.
\end{align*}
Thus the set $D^{00}_n \cup D_n^{01}$ resembles the letter I (see Fig.~\ref{fig1}).

For an edge $e \in E(C_n)$ we set 
\begin{align*} 
 \nu^{n,0}_{e}  &= \eta_n \q \text {if } e \in E_v(D^{01}_n), \\
 \nu^{n,0}_{e}  &= K_n \q \text {if } e \in E(D^{00}_n), \\
  \nu^{n,0}_{e}  &= 1 \q \text {otherwise.} 
\end{align*} 
\begin{figure} \includegraphics[width=4cm]{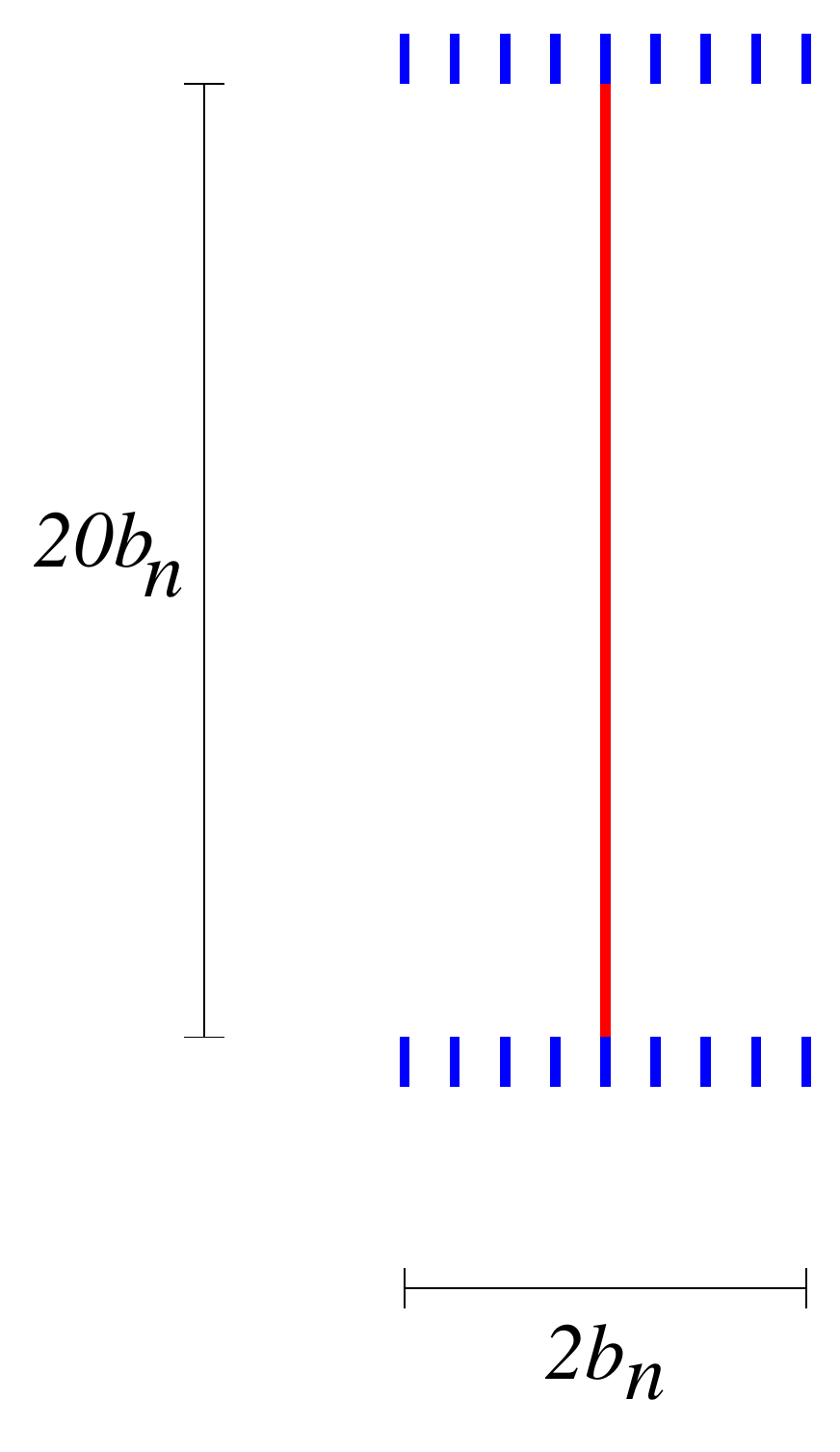}
\caption{The set $D^{00}_n \cup D_n^{01}$ resembles the letter I.
Blue edges have very low conductance. The red line represents edges with very 
high conductance. Drawing not to scale. 
}
\label{fig1}
\end{figure}

We then extend $\nu^{n,0}$ by symmetry to $E(B_n)$. More precisely,
for $z =(x,y) \in B_n$, let $R_1 z=( y,x)$ and  $R_2z = (a_n-y,a_n-x)$, so that
$R_1$ and $R_2$ are reflections in the lines $y=x$ and $x+y=a_n$.
We define $R_i$ on edges by $R_i (\{x,y\}) = \{R_i x, R_i y \}$ for $x,y \in B_n$. 
We then extend $\nu^{0,n}$ to $E( B_n)$ so that
$\nu^{0,n}_e =  \nu^{0,n}_{R_1 e }=\nu^{0,n}_{R_2 e }$ for  $e \in E(B_n)$.
We define the {\em obstacle} set $D_n^0$ by setting
(see Fig.~\ref{fig2}),
\begin{figure} \includegraphics[width=6cm]{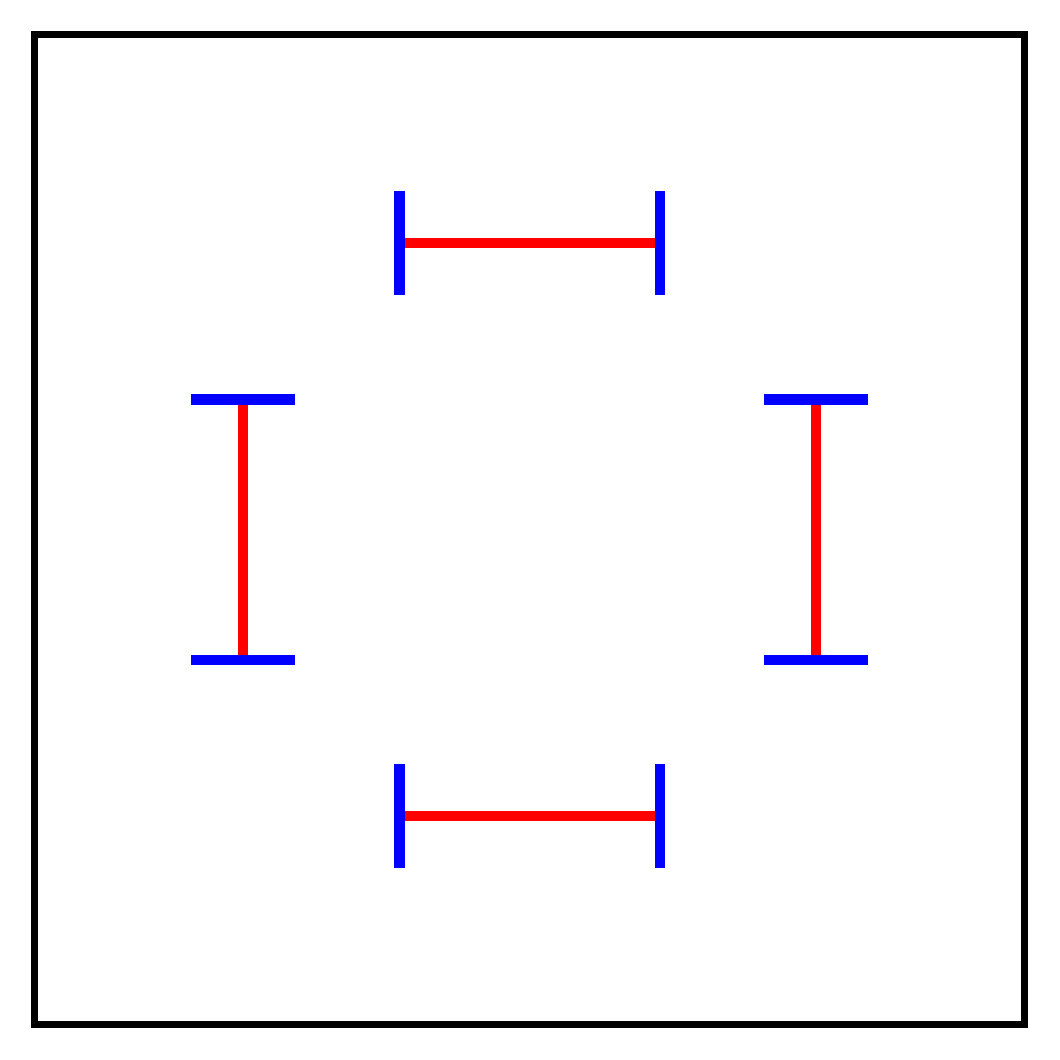}
\caption{The obstacle set $D_n^0$.
Blue lines represent ``ladders'' consisting of parallel edges with very low conductance. 
Each red line represents a sequence of adjacent edges with very high conductance.
Drawing not to scale.
}
\label{fig2}
\end{figure}
$$ D_n^0 = \bigcup_{i=0}^1 \big( D_n^{0,i} \cup R_1(D_n^{0,i})  \cup  R_2(D_n^{0,i})
 \cup R_1R_2 (D_n^{0,i} ) \big). $$
Note that $\nu^{n,0}_e=1$ for every edge adjacent to the boundary of $B_n$,
or indeed within a distance $ a_n/4$ of this boundary.
If $e=(x,y)$, we will write $e-z = (x-z,y-z)$. 
Next we extend $\nu^{n,0}$ to $E_2$ by periodicity, i.e.,
$\nu^{n,0}_e = \nu^{n,0}_{e+ a_n x}$ for all $x\in \Z^2$.
Finally, we define the conductances $\nu^n$ by translation by $\sO_n$, so that
\bes
 \nu^n_e =\nu^{n,0}_{e-\sO_n}, \, e \in E_2.
\ees
We also define the obstacle set at scale $n$ by
\bes
 D_n = \bigcup_{ x \in \bZ^2} (a_n x + \sO_n + D^0_n ).
\ees

We define the environment $\mu^n_e$ inductively by
\begin{align*}
 \mu^n_e &= \nu^{n}_e \q \text{ if } \nu^n_e \neq 1, \\
 \mu^n_e &= \mu^{n-1}_e \q \text{ if } \nu^n_e=1.
\end{align*}
Once we have proved the limit exists, we will set
\be \label{e:mudef}
 \mu_e = \lim_n \mu^n_e.
\ee

\begin{theorem} \label{T:erg}
(a) The environments $(\nu^n_e, e\in E_2)$, $(\mu^n_e, e\in E_2)$
are stationary, symmetric and ergodic.\\
(b) The limit \eqref{e:mudef} exists $\bP$--a.s. \\
(c) The environment $(\mu_e, e \in E_2)$ is  stationary, symmetric and ergodic.
\end{theorem}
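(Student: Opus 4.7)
My plan is to treat (a), (b), (c) in turn, leaning on the explicit structure of the construction.

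For (a), the starting observation is that $\nu^n$ and $\mu^n$ are each an $a_n$-periodic deterministic pattern shifted by the single random phase $\sO_n$. For $\nu^n$ this is the definition; for $\mu^n$ it follows by induction from $a_{k-1}\mid a_k$ and the fact that $\sO_k=\sO_n\bmod a_k$ recovers every earlier shift from $\sO_n$ alone. A short induction using the recursive definition of $\sO_n$ also yields that $\sO_n$ is uniform on $B_n'$. Stationarity under $t\in\bZ^2$ then reduces to $\sO_n-t\bmod a_n\eqd\sO_n$, which is immediate, and ergodicity reduces to the fact that the $\bZ^2$-action $\sO_n\mapsto\sO_n-t\bmod a_n$ on $B_n'$ is transitive, so that any shift-invariant measurable subset of $B_n'$ is empty or full. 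For symmetry, $\nu^{n,0}$ was constructed by applying $R_1,R_2$ to $D_n^{00}\cup D_n^{01}$, giving Klein--4 invariance about the centre of $B_n$; a short geometric check shows that the arrangement of four ``I'' pieces in fact has full dihedral $D_4$ symmetry about that centre, and combined with translation invariance this yields invariance of the law under every symmetry of $\bZ^2$.

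For (b), I use Borel--Cantelli. For a fixed edge $e$, $\{\mu^n_e\neq\mu^{n-1}_e\}\subseteq\{\nu^n_e\neq 1\}=\{e-\sO_n\in D_n^0\}$, and the obstacle set at scale $n$ satisfies $|D_n^0|=O(b_n)$, so
\bes
 \bP(\nu^n_e\neq 1)\le \frac{|D_n^0|}{a_n^2}=O\!\left(\tfrac{b_n}{a_n^2}\right)=O\!\left(\tfrac{1}{a_n\sqrt{n}}\right)
\ees
by condition (iv). Condition (v) forces $b_n$, hence $a_n$, to grow at least geometrically, so these probabilities are summable in $n$ and Borel--Cantelli gives that $\mu^n_e$ is eventually constant a.s.\ for each $e$. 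A countable intersection over $e\in E_2$ yields a single full-measure event on which $\mu_e=\lim_n\mu^n_e$ is defined for every edge.

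For (c), stationarity and symmetry of $\mu$ pass to the limit, since both properties are preserved under a.s.\ pointwise convergence of the field along the countable shift group and the finite symmetry group. The real obstacle is ergodicity. My plan is to exhibit $\mu$ as a measurable factor of an odometer. The full randomness is encoded by the independent vector $\omega=(\sO_1,Z_2,Z_3,\ldots)$, where $\sO_1$ is uniform on $(\bZ/a_1\bZ)^2$ and each $Z_k$ is uniform on $(\bZ/m_k\bZ)^2$; writing $\sO_n=\sO_1+\sum_{k=2}^{n}a_{k-1}Z_k$ one gets $\mu=\Phi(\omega)$ for a measurable $\Phi$. The action of $T_t$ on $\mu$ pulls back to $\sO_n\mapsto\sO_n-t\bmod a_n$ for every $n$; written on $(\sO_1,Z_2,\ldots)$ this is precisely the two-dimensional mixed-base odometer (``add $-t$ with carry''), and $\Phi$ intertwines the two actions. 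This $\bZ^2$-odometer factors as a product of two independent one-dimensional odometers acting on the $x$- and $y$-coordinates of the $Z_k$; each one-dimensional odometer is ergodic with respect to the product Haar measure (a standard fact), and a Fubini argument on invariant sets shows that the product of two ergodic $\bZ$-actions on independent factors is an ergodic $\bZ^2$-action. Pushing forward by $\Phi$ then gives ergodicity of the law of $\mu$. The main bookkeeping is in verifying that the pullback of $T_t$ through $\Phi$ is exactly the mixed-base odometer; the remaining ergodic-theoretic input is classical.
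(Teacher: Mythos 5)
Your proposal is correct, and for parts (a) and (b) it is in substance the paper's own argument: the paper likewise reduces $(\nu^n_e)$ and $(\mu^n_e)$ by equivariance to a single uniform phase (via a bijection $\beta$ between the translates $(t+a_1\bZ^2,\ldots,t+a_n\bZ^2)$, $t\in[0,a_n-1]^2$, and the admissible tuples $(x_1,\ldots,x_n)\in\calK_n$), and proves (b) with the same Borel--Cantelli bound $\bP(\nu^n_e\neq 1)\le c\,b_n/a_n^2$. Where you genuinely diverge is the ergodicity of the limit environment in (c). The paper, after the same equivariance reduction to the sequence $(\calO_1,\calO_2,\ldots)$ with the coordinatewise mod-$a_i$ translation action, gives a self-contained elementary proof: an invariant event $A$ with $\eps<\bP(A)<1-\eps$ is approximated within $\eps/4$ by a cylinder event $B\in\calG_n$, invariance forces $\bP(B\triangle(B+t))<\eps/2$ for every $t$, while averaging over $t\in[0,a_n-1]^2$ produces some $t$ with $\bP(B\triangle(B+t))\ge 2\bP(B)\bP(B^c)$, a contradiction. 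You instead identify the driving system as Haar measure on $\varprojlim(\bZ/a_n\bZ)^2$ with $\bZ^2$ acting through a dense subgroup (the two-coordinate mixed-base odometer), invoke the classical ergodicity of one-dimensional odometers, and combine the coordinates by the Fubini argument; importantly you state the correct product fact (the $\bZ^2$-action generated by $S\times\mathrm{id}$ and $\mathrm{id}\times R$ is ergodic when $S$ and $R$ are, even though the single map $S\times R$ need not be), so this step is sound. Both routes need the same factor step --- invariant events for $\mu$ pull back through the equivariant map to invariant events for the phase action --- which you flag as bookkeeping and which is exactly the paper's reduction. Your route buys structure (identification with a uniquely ergodic group translation, so ergodicity follows from soft classical facts); the paper's route buys self-containedness, using nothing beyond a finite averaging computation. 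One point worth making explicit in your (a): the full $D_4$ symmetry of the four-obstacle pattern about the centre of $B_n$ is indeed true (each obstacle is symmetric about its own axes, so reflection in the line $x=a_n'$ permutes the obstacles correctly) and is genuinely needed for the symmetry claim, since the construction only builds in the reflections $R_1,R_2$; the paper glosses over this check as well.
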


\begin{proof} 
(a) The random environments $(\nu^n_e, e\in E_2)$ and  $(\mu^n_e, e\in E_2)$ are 
equivariant functions of  $(S_1(\calO_1),\ldots , S_n(\calO_n))$ (where equivariance 
of a function means that it commutes with any isometry of $\bZ^2$).
Hence, to prove the theorem for $(\nu^n_e, e\in E_2)$ and  $(\mu^n_e, e\in E_2)$ it is 
enough to show that the the family $(S_1(\calO_1),\ldots , S_n(\calO_n))$ of 
random tilings is stationary, symmetric and ergodic.
Similarly, it is enough to show the claim for the family of random subsets 
$(\calO_1+a_1\bZ^2,\ldots, \calO_n+a_n\bZ^2)$, because 
$(S_1(\calO_1),\ldots , S_n(\calO_n))$ is an equivariant function of it. 

For $x=(x_1,x_2) \in \bZ^2$ define the modulo $a$ value of $x$ as the unique
$(y_1,y_2)\in [0,a-1]^2$ such that $x_1\equiv y_1$ (mod $a$) and $x_2\equiv y_2$ (mod $a$). 
We say that $x,y\in\bZ^2$ are equivalent modulo $a$ if their modulo $a$ values are the same, 
and denote it by $x\equiv y$ mod $a$. 

Let $\calK _n$ be the set of  $n$-tuples $(x_1,\ldots , x_n)$ with
$x_i\in (x_{i-1}+a_{i-1}\bZ^2)\cap [0,a_i-1]^2$ (with the convention $a_0=1, x_0=0$). 
Denote the uniform measure on $\calK_n$ by $\Pp _n$.
Note that $(\calO_1,\ldots,\calO_n)$ is distributed according to $\Pp_n$. 

Let $U_n$ be a uniformly chosen element of $[0,a_n-1]^2\cap\bZ^2$. Then 
since each $a_{i-1}$ divides $a_i$, the distribution 
of $(U_n+a_1 \bZ^2,\ldots, U_n +a_n \bZ^2)$ is stationary, symmetric and ergodic with 
respect to the isometries $(\hat T_t, t \in \bZ^2)$ defined by
$$ \hat T_t: (U_n+a_1 \bZ^2,\ldots, U_n +a_n \bZ^2) 
\to (t+U_n+a_1 \bZ^2,\ldots, t+ U_n +a_n \bZ^2). $$
Let $\beta$ be the bijection between the set 
$\{(t+a_1\bZ^2,\ldots, t+a_n \bZ^2)\, , \, t\in [0,a_n-1]^2\cap\bZ^2\}$ and the set 
$\{(x_1+a_1\bZ^2,\ldots, x_n+a_n \bZ^2)\, , \, (x_1,\ldots, x_n)\in \calK_n\}$ given
by $\beta (t)=(x_1,\ldots , x_n)$ where $x_i$ is the mod $a_i$ value of $t$. 
The push-forward of the uniform measure for $U_n$ is then the uniform measure on 
$\calK_n$. Furthermore, $\beta$ commutes with translations. That is, if 
$\beta(t)=(x_1,\ldots , x_n)$ and $\tau\in\bZ$, then 
$\beta (t+\tau) =(x_1+\tau,\ldots , x_n+\tau)$, where addition in the $i$'th coordinate 
is understood modulo $a_i$. Similarly, $\beta$ commutes with rotations and reflections.
Hence symmetry, stationarity and ergodicity of $(O_1+a_1\bZ^2, \ldots, O_n +a_n\bZ^2)$ 
follows from that of $(U_n+a_1\bZ^2, \ldots, U_n+a_n\bZ^2)$.

\sm (b) $B_n$ contains more than $2a_n^2$ edges, of which less than $100 b_n$ are
such that $\nu^{n,0}_e\neq 1$. So by the  stationarity of $\nu^n$,
$$ \bP( \nu^n_e \neq 1) \le \frac{50 b_n}{a_n^2} \leq \frac c {2^n}. $$
The convergence in \eqref{e:mudef} then follows by the Borel-Cantelli lemma.

\sm (c) The definition \eqref{e:mudef} shows that $(\mu_e, e\in E_2)$ 
is stationary and symmetric, so all that remains to be proved is ergodicity.
Since $(\mu_e, e \in E_2)$ is an equivariant function of 
$(\calO_1+a_1\bZ^2,\calO_2+a_2\bZ^2,\ldots)$, it is enough to prove
ergodicity of the latter.

Denote by $\calK_\infty$ the family of sequences $(x_1,x_2,\ldots)$, satisfying 
$x_i\in (x_{i-1}+a_{i-1}\bZ^2)\cap [0,a_i-1]^2$ for every $i$. 
Let $\calG_\infty$ be the $\sigma$-field generated by $(\calO_1,\calO_2,\ldots)$, 
and (by a slight abuse of notation) 
for the rest of this proof let $\Pp$ be the law of $(\calO_1,\calO_2,\ldots)$.
Let $\calG_n$ be the sub-$\sigma$-field of $\calG_\infty$
generated by $(\calO_1,\ldots,\calO_n)$.

If $(x_1,x_2,\ldots)\in \calK_\infty$, $t\in\bZ^2$, define the $\Pp$-preserving transformation 
$t+(x_1,x_2,\ldots)$ as $(t+x_1,t+x_2,\ldots)$, where in the $i$'th coordinate is modulo 
$a_i$. Using the notation 
$(x_1,x_2,\ldots)+(a_1\bZ^2,a_2\bZ^2,\ldots)=(x_1+a_1\bZ^2,x_2+a_2\bZ^2,\ldots)$, 
we have 
$(t+(x_1,x_2,\ldots))+(a_1\bZ^2,a_2\bZ^2,\ldots)=t+((x_1,x_2,\ldots)+(a_1\bZ^2,a_2\bZ^2,\ldots))$. 
That is, $(\calO_1+a_1\bZ^2,\calO_2+a_2\bZ^2,\ldots)$ is an equivariant function of 
$(\calO_1,\calO_2,\ldots)$. So it is enough to prove ergodicity for $(\calO_1,\calO_2,\ldots)$.

Now let $A\in \calG_\infty$ be invariant, and suppose by contradiction 
that there is some $\eps>0$ such that $\eps < \bP (A)< 1-\eps$. There exists 
some $n$ and $B\in \calG_n$ with the property that $\bP (A\triangle B)<\eps /4$ (where $\triangle$ is the symmetric difference operator). 
This also implies that $3\eps /4<\bP(B)<1- 3\eps /4$. We have for $t \in \bZ^2$
\begin{align*}
\bP (B\triangle (B+t))&\le \bP (A\triangle B) + \bP (A\triangle (B+t)) 
= \bP (A\triangle B)  + \bP ((A+t)\triangle (B+t)) \\
&= \bP (A\triangle B) + \bP ((A\triangle B)+t) =2\bP (A\triangle B)< \eps/2.
\end{align*}
We now show that we can choose $t$ so that 
$\bP (B\triangle (B+t))  \ge 2\bP(B)\bP(\calK _\infty\setminus B)\geq \eps/2$, 
giving a contradiction. 

For an $E\in\calG_n$ denote by 
$E_n$ the subset of $\calK_n$ such that $(\calO_1,\calO_2,\ldots)\in E$ if and only if 
$(\calO_1,\ldots, \calO_n)\in E_n$. Note that $\Pp (E)=\Pp_n (E_n)$. So we want to 
show that for any $B\in\calG_n$ there exists a $t$ such that 
$\bP_n (B_n\triangle (B_n+t)) \ge 2\bP_n(B_n)\bP_n(\calK _n\setminus B_n)$.

Consider the following average:
\begin{align}\label{j27.2}
\frac{1}{a_n^2} \sum_{t\in[0,a_n-1]^2} \bP_n (B_n\triangle (B_n+t)) &=
\frac{2}{a_n^2} \sum_{t\in[0,a_n-1]^2} \bP_n (B_n\setminus (B_n+t)) \\
&=\frac{2}{a_n^4} \sum_{t\in[0,a_n-1]^2}\sum_{x\in \calK _n} \I (x\in B_n\setminus (B_n+t)). \nonumber
\end{align}
Use 
\begin{align*}
\sum_{x\in \calK _n} \I (x\in B_n\setminus (B_n+t))= \sum_{x\in B_n} \I (x\in B_n\setminus (B_n+t))=
\sum_{x\in B_n} \I (x-t\not\in  B_n)
\end{align*}
and change the order of summation 
to obtain 
\begin{align}\label{j27.3}
&\frac{2}{a_n^4}\sum_{t\in[0,a_n-1]^2}\sum_{x\in \calK _n} \I (x\in B_n\setminus (B_n+t))=
\frac{2}{a_n^4}\sum_{x\in B_n}\sum_{t\in[0,a_n-1]^2} \I (x-t\not\in  B_n)\\
&\qquad= \frac{2}{a_n^4}\sum_{x\in B_n} (a_n^2-|B_n|)
=\frac{2}{a_n^4}|B_n|(a_n^2-|B_n|)=2\bP_n(B_n)\bP_n(\calK _n\setminus B_n).\nonumber
\end{align}
It follows from \eqref{j27.2}--\eqref{j27.3} that there exists a $t\in [0,a_n-1]^2$ such that  
$\bP_n (B_n\triangle (B_n+t)) \geq 2\bP_n(B_n)\bP_n(\calK _n\setminus B_n)$.

\end{proof}

\section{Choice of $K_n$ and $\eta_n$ }\label{s:choice}  

Let 
\begin{align}\label{j27.4}
\calL_n f(x) = \sum_{y} \mu^n_{xy} (f(y)-f(x)), 
\end{align}
and $X^n$ be the associated Markov process. 

\begin{proposition} \label{P:qnclt}
For each $n \ge 1$ there exists a constant $\sigma_n$, depending only
on $\eta_i$, $K_i$, $1\le i \le n$, such that the QFCLT holds for $X^n$ with limit $\sigma_n W$.
\end{proposition}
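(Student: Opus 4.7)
The plan is to combine periodic homogenisation, applied conditionally on the offsets
$\calO = (\calO_1, \dots, \calO_n)$, with the global stationarity/ergodicity and
symmetry of $\mu^n$ provided by Theorem~\ref{T:erg}. The starting point is that $\mu^n$
is uniformly elliptic: every conductance $\mu^n_e$ takes values in the finite set
$\{1, \eta_1, K_1, \dots, \eta_n, K_n\}$, so there exist constants
$0 < c_n^- \le c_n^+ < \infty$, depending only on $(\eta_i, K_i)_{i\le n}$, with
$c_n^- \le \mu^n_e \le c_n^+$ for every edge $e$ and every $\omega$. In particular
$\bE(\mu^n_e \wedge (\mu^n_e)^{-1}) < \infty$.

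Next I would condition on $\calO$. By the divisibility condition~(ii) in
Section~\ref{const}, each $\nu^i$ is $a_n \bZ^2$-periodic, and hence so is $\mu^n$.
Conditional on $\calO$ the conductances $\mu^n$ therefore form a deterministic,
uniformly elliptic, $a_n \bZ^2$-periodic field, and the QFCLT for the corresponding
VSRW follows from classical periodic homogenisation on the torus
$(\bZ / a_n \bZ)^2$ via the corrector equation: for each realisation of $\calO$ one
obtains $X^{n,(\eps)} \Rightarrow \Sigma(\calO)\, W$ under $P^0_\omega$ for $\bP$-a.e.\
$\omega$ consistent with $\calO$.

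To remove the dependence on $\calO$, I would argue that $\Sigma(\omega)$ is a
shift-invariant functional of $(\mu^n_e)$: the translation identity
$\mu^n_{e+x}(\omega) = \mu^n_e(T_x\omega)$ together with stationarity gives
$\Sigma(T_x \omega) = \Sigma(\omega)$ for every $x \in \bZ^2$, and ergodicity
(Theorem~\ref{T:erg}(a)) then forces $\Sigma(\omega) \equiv \Sigma_n$ to be a
$\bP$-a.s.\ constant matrix. Since each admissible value of $\calO$ occurs with strictly
positive probability, in fact $\Sigma(\calO) = \Sigma_n$ for every $\calO$. Finally,
symmetry of the law of $\mu^n$ under the dihedral group of the square, combined with
$R \Sigma_n W \eqd \Sigma_n W$ for each such reflection/rotation $R$, forces
$\Sigma_n \Sigma_n^T$ to commute with every element of the group, so
$\Sigma_n \Sigma_n^T = \sigma_n^2 I$ for some $\sigma_n \ge 0$.

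The main obstacle is pinning down the precise periodic-homogenisation statement for the
VSRW on $\bZ^2$; the simplest way around this is to appeal directly to Biskup's
theorem cited in Remark~(2) following Theorem~\ref{T:main1}, which gives the QFCLT in
$d=2$ under $\bE(\mu_e \wedge \mu_e^{-1}) < \infty$ and additionally yields
$\sigma_n > 0$; the symmetry/ergodicity argument above then produces $\sigma_n$ as a
constant depending only on the law of $\mu^n$, and hence, for fixed scale-sequence
parameters, only on $(\eta_i, K_i)_{i \le n}$.
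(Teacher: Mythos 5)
Your proposal is correct, and its fallback route is essentially the paper's own proof: the paper disposes of Proposition \ref{P:qnclt} in one line by noting that $\mu^n$ is stationary, symmetric and ergodic (Theorem \ref{T:erg}) and uniformly bounded away from $0$ and $\infty$ (it takes finitely many values in $\{1,\eta_1,K_1,\dots,\eta_n,K_n\}$), and then citing the general QFCLT of Barlow--Deuschel \cite{BD} rather than Biskup's $d=2$ result; the deterministic, isotropic form $\sigma_n W$ of the limit is exactly your shift-invariance-plus-dihedral-symmetry argument, which the paper records in the Introduction. Your primary route --- conditioning on $(\calO_1,\dots,\calO_n)$ so that $\mu^n$ becomes a deterministic $a_n\bZ^2$-periodic uniformly elliptic field and invoking periodic homogenisation --- is also sound and is in fact the mechanism the paper uses later (via \cite{BLP}) in Theorem \ref{T:eK} to identify $\sigma_n$ through the variational formula \eqref{e:vp}; note that on this route the ergodicity step is nearly superfluous, since the homogenised matrix of a periodic medium does not depend on the choice of origin, so $\Sigma(\calO)$ is automatically the same for every offset. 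The only caveat, which you already flag, is that ``depending only on $\eta_i,K_i$, $1\le i\le n$'' is to be read with the scale sequences $a_i,b_i,\beta_i$ fixed; the substantive content is that $\sigma_n$ does not involve the not-yet-chosen parameters at scales $m>n$, which both routes deliver.
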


\begin{proof} Since $\mu_e^n$ is stationary, symmetric and ergodic, and
$\mu^n_e$ is uniformly bounded and bounded away from 0, the result follows 
from \cite[Theorem 6.1]{BD}  (see also Remarks 6.2 and 6.5 in that paper). 
\end{proof}

\sms
We now set 
\be \label{e:etadef}
  \eta_n =   b_n^{-(1+1/n)}, \, n \ge 1.
\ee

\begin{remark}
For the full WFCLT proved in \cite{BBT-A} we take $\eta_n = O(a_n^2)$.
\end{remark}

\begin{theorem}\label{T:eK}
There exist constants $K_n$ such that $\sigma_n=1$ for all $n$.
\end{theorem}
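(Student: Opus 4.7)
We argue by induction on $n$. Take $\mu^0_e\equiv 1$ as the starting environment, for which the simple VSRW on $\bZ^2$ has limiting variance $\sigma_0=\sqrt 2$. Assume that $K_1,\ldots,K_{n-1}$ have been chosen so that $\sigma_{n-1}=1$; we now choose $K_n$ so that $\sigma_n=1$. Viewing $\sigma_n$ as a function $K\mapsto \sigma_n(K)$ of the single parameter $K=K_n$, all other parameters held fixed, the plan is to verify three properties: (a) $K \mapsto \sigma_n(K)$ is continuous and non-decreasing on $(0,\infty)$; (b) $\sigma_n(K) < 1$ for some small $K > 0$; (c) $\sigma_n(K) > 1$ for some large $K$. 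The intermediate value theorem then produces the desired $K_n$.

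For (a), apply the Kipnis--Varadhan variational formula
\bes
\sigma_n^2 |\xi|^2 \;=\; \inf_\phi \bE\Big[\sum_{y\sim 0} \mu^n_{0y}\big(\xi\cdot y + \phi(\tau_y\om) - \phi(\om)\big)^2\Big],
\ees
the infimum being over bounded measurable $\phi : \Omega \to \bR$ (see \cite{DFGW, BD}). For each edge $e$, $\mu^n_e$ is affine and non-decreasing in $K$, and for any fixed $\phi$ the integrand is continuous and non-decreasing in each $\mu^n_e$; standard parametric-infimum arguments then yield (a).

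For (b), as $K\to 0^+$ the former ``highway'' edges carry essentially no current, so the effect of $D_n$ on the walk is dominated by its low-conductance walls. By monotonicity, $\sigma_n(K)$ is bounded above by the variance of the environment obtained by restoring conductance $1$ on the bars while retaining $\eta_n<1$ on the walls; for $n\geq 2$ this is strictly less than $\sigma_{n-1}=1$ because the walls have positive edge density. In the base case $n=1$, the explicit smallness of $\eta_1 = b_1^{-2}$ combined with $b_1$ comparable to $a_1$ (condition (iv) with $n=1$) yields, via an electrical-network comparison on a single box, a strict slowdown from $\sigma_0=\sqrt 2$ past $1$.

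For (c), as $K \to \infty$ each bar in $D_n^0$ becomes an approximate super-conductor that identifies its two endpoints. Plugging into the variational formula a test corrector $\phi$ that is locally constant along each bar and extended in a way that respects the reflective symmetries $R_1,R_2$, one obtains $\sigma_n(K)\ge 1+c$ uniformly for $K$ large, where $c>0$ quantifies the isotropic speedup supplied by the four bars in each box. Combined with (a) and (b), the intermediate value theorem completes the induction. The principal obstacle is the quantitative estimate in (c): despite the limited length $\sim b_n$ of the bars in a box of side $a_n$, one must show that their shortcut effect is sufficient to overcome the wall-induced slowdown and push $\sigma_n$ strictly above $1$.
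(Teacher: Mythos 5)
Your skeleton---monotonicity and continuity of $K\mapsto\sigma_n^2(K)$, $\sigma_n^2<1$ at small $K$, $\sigma_n^2>1$ at large $K$, then the intermediate value theorem---is exactly the paper's. The genuine gap is step (c), which you yourself flag as the principal obstacle, and the tool you propose for it cannot work as stated: the corrector formula you quote is an \emph{infimum}, so plugging in any test function $\phi$ (locally constant on the bars, symmetrized or not) yields an \emph{upper} bound on $\sigma_n^2$, never the lower bound $\sigma_n^2(K)\ge 1+c$ that you need. A lower bound on $\sigma_n^2$ is an upper bound on an effective resistance, and for that one must pass to the dual (Thomson) principle and exhibit an explicit unit-flux flow. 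This is precisely what the paper does after reducing, by periodicity, to the effective conductance across the finite box $B_n$ as in \eqref{e:vp} (where continuity and monotonicity in $K$ are elementary): it takes $m_n^{-1}$ replicas of the optimal level-$(n-1)$ flow, diverts them laterally around the low-conductance crossbars, and runs them along the high-conductance bars. The point of the geometry is quantitative: each bar has height $20b_n$ while only a bounded number of $b_n$-squares incur the ``corner'' cost of at most $4\ell_n^{-2}$ each, so at $K=\infty$ the diversion produces a net energy \emph{decrease} of order $\ell_n^{-2}$ per obstacle; the extra dissipation in the bars at finite $K$ is of order $b_n/(nK)$, and the repairs near the ends of the horizontal obstacles cost only $O(a_{n-1}/b_n)\,\ell_n^{-2}$. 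Summing, $K=50b_n$ already gives conductance strictly above $1$, whence $K_n<50b_n$ exists. This bookkeeping, showing that the bars' shortcut beats the walls' slowdown with the chosen $\eta_n$, $\beta_n$, $b_n$, is the actual content of the theorem; deferring it leaves the statement unproved.

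Two smaller points. Your base case is misnormalized: in the paper's convention \eqref{e:vp} (effective conductance across $B_n$) the homogeneous environment has constant $1$, not $\sqrt2$, and the induction then treats $n=1$ like every other step; with your value $\sqrt2$ the level-$1$ obstacles, which occupy only $O(b_1)$ of the $\asymp a_1^2$ edges of $B_1$, could not plausibly pull the constant below $1$ at small $K$, so your special $n=1$ argument would fail. In (b), strictness of $\sigma_n^2(0)<1$ also needs more than ``positive edge density'': the paper pastes copies of the level-$(n-1)$ minimizer to get $\sigma_n^2(0)\le 1$ and rules out equality because the pasted function would then have to be harmonic in the new environment, which it is not; some argument of this kind (or a statement that the old minimizer puts nonzero gradient on a lowered edge) is required.
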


\begin{proof} Let $n \ge 1$; we can assume that $K_i, 1\le i \le n-1$
have been chosen so that $\sigma_i=1$ for $i \le n-1$.
The environment $\mu^n$ is periodic, so we can use the theory of
homogenization in periodic environments (see \cite{BLP}) to calculate $\sigma_n$. 

Since $\sigma_n$ is non-random, we can simplify our notation
and avoid the need for translations by assuming that $\sO_k=0$ for $k=1, \dots n$;
note that this event has strictly positive probability.

Let $k \in \{a_{n-1}, b_n, a_n\}$, and let
$$ \sQ_k = \{ [0,k]^2 + z, z \in k\bZ^2 \}. $$
Thus $\sQ_k$ gives a tiling of $\bZ^2$ by squares of side $k$ which are disjoint
except for their boundaries. To avoid double counting of the borders, given
$Q \in \sQ_k$ and $m \in \{n-1,n\}$ set
\begin{align*}
 {\wt \mu}^{Q,m}_{xy} = 
\begin{cases}
 \half \mu^m_{xy} & \hbox{ if } x,y \in \pd_i(Q), \\
  \mu^m_{xy} & \hbox{ otherwise}.
\end{cases}
\end{align*} 
For $f: Q \to \bR$ set
$$ \sE^m_Q (f,f) = \half \sum_{x,y \in Q} \wt \mu^{Q,m}_{xy} (f(y)-f(x))^2. $$
Let $\sH_n =\{ f:B_n \to \bR \text{ s.t. } f(x,0)=0, f(x,a_n)=1, 0\le x\le a_n\}$.  Then
\be \label{e:vp}
 \sigma_n^2 = \inf\{ \sE^n_{B_n}(f,f): f \in \sH_n \}. 
\ee
Thus $\sigma_n^{-2}$ is just the effective resistance across the square $B_n$.
(Note that this would be 1 if one had $\mu^n_e \equiv 1$).
For $K \in [0,\infty)$ let $\sigma^{2}_n(K)$ be the effective conductance
across $B_n$ if we take $K_n=K$.
Since $B_n$ is finite, $\sigma_n^2(K)$ is a continuous non-decreasing function
of $K$. We will show that $\sigma_n^2(0)<1$ and $\sigma^2_n(K)>1$ for
sufficiently large $K$; by continuity it follows that there exists 
a $K_n$ such that $\sigma^2_n(K_n)=1$. 

Let $h_{n-1}$ be the function which attains the minimum in \eqref{e:vp}
for $n-1$. Note that $h_{n-1}$ is harmonic in the interior of $B_{n-1}$. 
By the inductive hypothesis we have $\sE^{n-1}_{B_{n-1}}(h_{n-1},h_{n-1})=1$. 
Further, since $\mu_e^{n-1}$ is symmetric
with respect to reflection in the axis $x_1 =a'_{n-1}$, we have
$h_{n-1}(0,x_2)=h_{n-1}(a_{n-1},x_2)$ for $0\le x_2 \le a_{n-1}$.
Let $f: B_n \to [0,1]$ be the function obtained by pasting together
shifted copies of $h_{n-1}$ in each of the squares in $\sS_{n-1}$ 
contained in $B_n$. More precisely, extend $h_{n-1}$ by periodicity
to $\bZ \times \{0, \dots a_{n-1}\}$, recall that  $a_n = m_n a_{n-1}$,
and for $k a_{n-1} \le x_2 \le (k+1)a_{n-1}$, with $0\le k \le m_n-1$, set
$$ f(x_1,x_2) = \frac{k+ h_{n-1}(x_1, x_2 - k a_{n-1})}{m_n}. $$
Then 
$$ \sE^{n-1}_{B_n}(f,f) 
= \sum_{Q \in \sS_{n-1}, Q \subset B_n} \sE^{n-1}_{Q}(f,f)
= m_n^2  \sE^{n-1}_{B_{n-1}}(h_{n-1},h_{n-1})m_n^{-2}=1. $$
If $K=0$ then we have $\mu^n_e \le \mu^{n-1}_e$, with strict inequality
for the edges in $D_n$. We thus have $\sigma^2_n(0) \le 1$. If 
we had equality, then the function $f$ would attain the minimum in
\eqref{e:vp}, and so would be harmonic in the environment $\mu_e^n$. 
Since this is not the case, we must have $\sigma_n^2(0) <1$.

To obtain a lower bound on $\sigma_n^2(K)$, we use the dual characterization
of effective resistance in terms of flows of minimal energy -- see
\cite{DS}, and \cite{BB3} for use in a similar context to this one.

Let $Q$ be a square in $\sQ_k$, with lower left corner $w=(w_1,w_2)$.
Let $Q'$ be the rectangle obtained by removing the top and bottom rows of $Q$:
$$ Q'= \{ (x_1,x_2): w_1 \le x_1 \le w_1+ k, w_1+1 \le x_2 \le w_1 + k-1\}. $$
A {\em flow} on $Q$ is an antisymmetric function $I$ on $Q \times Q$ which satisfies 
$I(x,y)=0$ if $x \not\sim y$,
$I(x,y)=-I(y,x)$, and 
$$ \sum_{y \sim x} I(x,y)=0 \q \text{ if } x \in Q'. $$
Let $\pd^+ Q =\{ (x_1, w_2+k): w_1 \le x_1 \le w_1+k \}$ be the top of $Q$.
The {\em flux} of a flow $I$ is
$$ F(I) = \sum_{x\in\pd^+ Q} \sum_{y \sim x} I(x,y). $$
For a flow $I$ and $m \in \{n-1,n\}$ set
$$ E^m_Q(I,I) = \half \sum_{x\in Q} \sum_{y\in Q} (\wt \mu^{Q,m}_{xy})^{-1} I(x,y)^2. $$
This is the energy of the flow $I$ in the electrical network given by $Q$ with 
conductances $(\wt \mu^{m,Q}_e)$.
If $\sI(Q)$ is the set of flows on $Q$ with flux 1, then
\bes
 \sigma_n(K)^{-2} = \inf\{ E^n_{B_n}(I,I): I \in \sI(B_n) \}. 
\ees
Let $I_{n-1}$ be the optimal flow for $\sigma^{-2}_{n-1}$. The square $B_n$ 
consists of $m_n^2$ copies of $B_{n-1}$; define a preliminary flow
$I'$ by placing a replica of $m_n^{-1} I_{n-1}$ in each of these copies.
For each square $Q \in \sQ_{a_{n-1}}$ with $Q\subset B_n$ we have
$E^{n-1}_Q(I',I')=m_n^{-2}$, and since there are $m_n^2$ of these squares
we have $E^{n-1}_{B_n}(I',I')=1$.

We now look at the tiling of $B_n$ by squares in $\sQ_{b_n}$; 
recall that $ \ell_n = a_n/b_n$ and that $\ell_n$ is an integer.
For each $Q \in \sQ_{b_n}$ we have $E^{n-1}_Q(I',I')= \ell_n^{-2}$. 
Label these squares by $(i,j)$ with $1\le i,j\le \ell_n$.

We now describe modifications to the flow $I'$ in a square $Q$.
Initially the flow runs from bottom to top of the square; if we reflect in the
diagonal of the square parallel to the line $x_1=x_2$, we obtain a flow $J$
which begins at the bottom, and emerges on the left side of the square.
As in \cite[Proposition 3.2]{BB3} we have $E_Q(J,J)\le E_Q(I',I')=\ell_n^{-2}$.
Thus `making a flow turn a corner' costs no more, in terms of energy, than letting
it run on straight.

Suppose we now consider the flow $I'$ in a column $(i_1, j), 1\le j \le \ell_n$,
and we wish to make the flow avoid an obstacle square $(i_1, j_1)$. Then we can make the
flow make a left turn in $(i_1, j_1-1)$, and then a right turn in  $(i_1-1, j_1-1)$
so that it resumes its overall vertical direction. 
This then gives rise to two flows in $(i_1-1, j_1-1)$: the original flow $I'$ plus
the new flow: as in \cite{BB3} the combined flow in the square $(i_1-1, j_1-1)$
has energy less than $4 \ell_n^{-2}$. If we carry the combined flow vertically
through the square $(i_1-1,j_1)$, and  make the similar modifications above
the obstacle, then we obtain overall a new flow $J'$ which matches $I'$
except on the 6 squares $(i,j), i_1\le i \le i_1, j_1-1\le j \le j_1+1$. 
The energy of the original flow in these 6 squares is $6\ell_n^{-2}$, while
the new flow will have energy less than $14\ell_n^{-2}$: we have a `cost'
of at most $4\ell_n^{-2}$ in the 3 squares $(i_1-1,j), j_1-1\le j \le j_1+1$, zero
in $(i_1,j_1)$ and  at most $\ell_n^{-2}$ in the two remaining squares. Thus the
overall energy cost of the diversion is at most $8 \ell_n^{-2}$ (see Fig.~\ref{fig3}).

\begin{figure} \includegraphics[width=8cm]{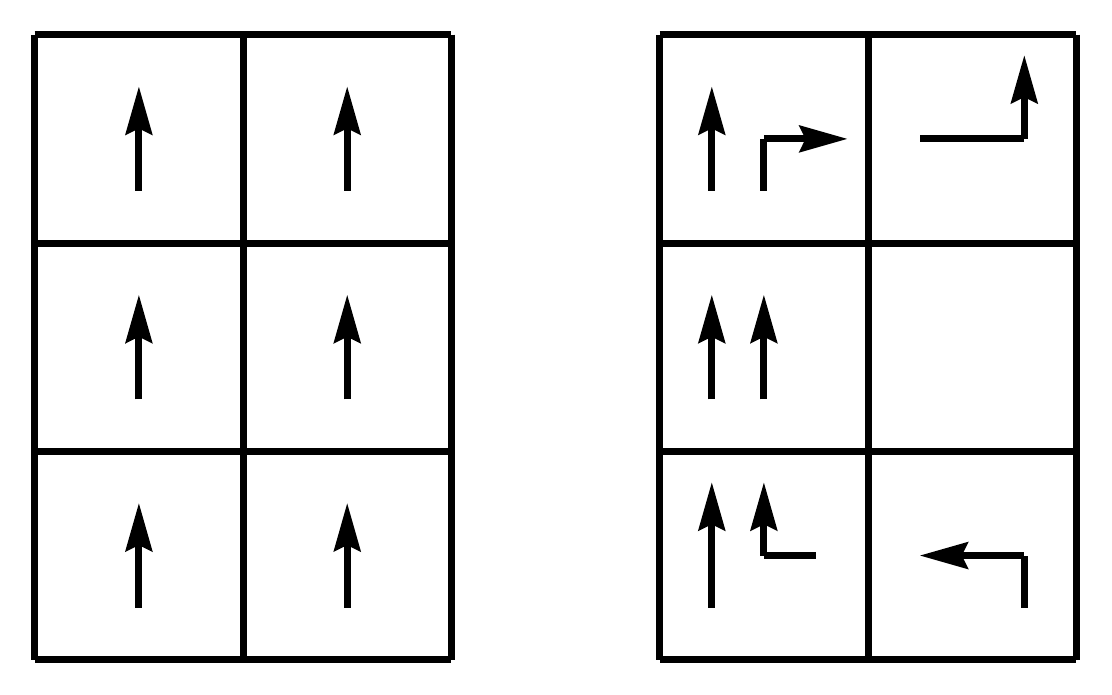}
\caption{Diversion of current around an obstacle square.}
\label{fig3}
\end{figure}

We now use a similar procedure to construct a modification of $I'$ in $B_n$
with conductances $(\mu_e^n)$. We have four obstacles, two oriented
vertically and resembling an $I$, and two horizontal ones. 
The crossbars on the $I$, that is the sets $D^{01}$, contain vertical edges with 
conductance $\eta_n \ll1 $. We therefore modify $I'$ to avoid these edges, and
the squares with side $b_n$ which contain them.

Consider the left vertical $I$, which has center $(a_n'-\beta_n, a_n')$.
Let $(i_1,j_1)$ be the square which contains at the top the bottom left branch
of the $I$, so that this square has top right corner $(a'_n-\beta_n, a'_n-10b_n)$.
The top of this square contains vertical edges with conductance $\eta_n$,
so we need to build a flow which avoids these. We therefore (as above) make the  
flow in the column $i_1$ take a left turn in square $(i_1,j_1-1)$, a right turn
in $(i_1-1,j_1-1)$, carry it vertically through $(i_1-1,j_1)$, take a right turn
in $(i_1-1,j_1+1)$ and carry it horizontally through $(i_1,j_1+1)$ into
the edges of high conductance at the right side of $(i_1,j_1+1)$.
The same pattern is then repeated on the other 3 branches of the left obstacle $I$,
and on the other vertical obstacle. 

We now bound the energy of the new flow $J$, and initially will make the calculations
just for the change in columns $i_1-1$ and $i_1$  below and to the left
of the point  $(a_n'-\beta_n, a_n')$. Write $M=10$ for the half of the overall
height of the obstacle.
There are $2(M+2)$ squares in this region 
where $I'$ and $J$ differ; these have labels $(i,j)$ with $i=i_1-1, i_1$ 
and $j_1-1\le j \le j_1+ M$. We begin by calculating the energy if $K=\infty$.
In 3 of these squares the new flow $J$ has energy
at most $4 \ell_n^{-2}$, in $M+1$ of them it has energy at most $\ell_n^{-2}$,
and in the remaining $M$ it has zero energy. So writing $R$ for this region we
have $E_R(I',I')= (2M+4)\ell_n^{-2} $, while
$$ E_R(J,J) \le (3\cdot 4 + M+1 )\ell_n^{-2} = (13+M)\ell_n^{-2}. $$
So 
\begin{align}\label{j27.6}
E_R(J,J) -E_R(I',I') \le (9-M) \ell_n^{-2} = - \ell_n^{-2}<0.
\end{align}
This is if $K=\infty$. Now suppose that $K<\infty$. The vertical edge in the obstacle carries a current
$2 /\ell_n$ and has height $M b_n$, so the energy of $J$ on these edges
is at most
\begin{align}\label{j27.7}
E'= \frac{4 \ell_n^{-2} M b_n}{K}\le \frac{4 M b_n}{Kn }. 
\end{align}
The last inequality holds because $\ell_n \geq \sqrt{n}$.
Finally it is necessary to modify $I'$ near the 4 ends of the two horizontal
obstacles. For this, we just modify $I'$ in squares of side $a_{n-1}$, and
arguments similar to the above show that for the new flow $J$ in this region $R'$, which
consists of $4+ 2 b_n/a_{n-1}$ squares of side $a_{n-1}$, we have 
\begin{align}\label{j27.8}
E_{R'}(J,J) - E_{R'}(I',I') \le \frac{9  b_n }{ a_{n-1} m_n^2} 
=  \frac{ 9 a_{n-1} }{ b_n } \ell_n^{-2}. 
\end{align}
The new flow $J$ avoids the edges where $\mu^n_e=\eta_n$.
Combining these terms we obtain for the whole square $B_n$, using \eqref{j27.6}-\eqref{j27.8},
\begin{align*}
E^n_{B_n}(J,J) - E^{n-1}_{B_n}(I',I') 
&\le - 8 \ell_n^{-2} + \frac{ 16 M b_n}{n K} +   \frac{ 40 a_{n-1} }{ b_n } \ell_n^{-2} \\
&\le - 7 \ell_n^{-2} + \frac{ 16 M b_n}{n K} < -\frac{7}{2n} + \frac{160 b_n}{nK}.
\end{align*}
So if $K' = 50 b_n$, we have
$$ \sigma_n^{-2}(K') \le  E^n_{B_n}(J,J) \le 1 - c n^{-1}< 1. $$
Hence there exists $K_n <50 b_n$ such that $\sigma_n^2(K_n)=1$.
\end{proof} 

\begin{lemma} \label{L:Lp}
Let $p<1$. Then $\bE \mu_e^p < \infty$, and $\bE \mu_e^{-p} <\infty$.
\end{lemma}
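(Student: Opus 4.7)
The plan is to exploit the multi-scale structure: for each edge, exactly one scale determines its value. For $e \in E_2$, set $N(e) = \max\{n \ge 1 : \nu^n_e \neq 1\}$ (taking $N(e) = 0$ if this set is empty). The inductive definition of $\mu^n_e$ gives $\mu_e = \nu^{N(e)}_e \in \{\eta_{N(e)}, K_{N(e)}\}$ on $\{N(e) \ge 1\}$, and $\mu_e = 1$ on $\{N(e) = 0\}$. Hence both moments decompose as
\[
 \bE \mu_e^{\pm p} \le 1 + \sum_{n\ge 1} \bigl(\eta_n^{\pm p} + K_n^{\pm p}\bigr) \bP(\nu^n_e \neq 1).
\]

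I will feed in the quantitative bounds already available. The proof of Theorem \ref{T:erg}(b) gives $\bP(\nu^n_e \neq 1) \le 50 b_n/a_n^2$, Theorem \ref{T:eK} gives $K_n < 50 b_n$, and \eqref{e:etadef} together with condition (iii) gives $\eta_n < 10^{-10}$. A short extra observation shows $K_n \ge 1$: at $K = 1$ the high-conductance edges become invisible, so testing the linear function $f(x_1,x_2) = x_2/a_n$ in \eqref{e:vp} shows the energy strictly decreases once any $\eta_n$ edge is present, forcing $\sigma_n^2(1) < 1 = \sigma_n^2(K_n)$ and hence $K_n > 1$ by monotonicity. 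Consequently $\eta_n^p$ and $K_n^{-p}$ are bounded by constants, $K_n^p \le c\, b_n^p$, and $\eta_n^{-p} = b_n^{p(1+1/n)}$.

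The two nontrivial tail series to control are therefore
\[
 \sum_{n\ge 1}\frac{b_n^{1+p}}{a_n^2} \q\text{and}\q \sum_{n\ge 1}\frac{b_n^{\,1+p(1+1/n)}}{a_n^2}.
\]
Using $b_n \le a_n/\sqrt{n}$ from condition (iv), the general terms are bounded respectively by $a_n^{p-1} n^{-(1+p)/2}$ and $a_n^{p(1+1/n)-1}$. Condition (v) forces $a_n \ge b_n \ge 2^{n-1} b_1$, so $a_n$ grows at least geometrically. Since $p < 1$, the first exponent is at most $p-1 < 0$ and the first series is geometrically convergent. For the second series, $p(1+1/n) - 1 \to p - 1 < 0$, so for all $n \ge n_0(p)$ the exponent is at most $-(1-p)/2$; geometric decay gives convergence of the tail, and the finitely many terms with $n < n_0(p)$ are each individually finite.

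The only real subtlety is the $b_n^{p/n}$ factor in $\eta_n^{-p}$ produced by the $1/n$ correction in the definition of $\eta_n$. This is exactly what forces the strict inequality $p < 1$ in the hypothesis: the gap $1 - p$ absorbs the correction once $n$ is large, at the cost of enduring finitely many initial terms. Apart from this bookkeeping and the brief comparison argument for $K_n \ge 1$, no genuinely new estimates are needed.
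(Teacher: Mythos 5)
Your decomposition and bookkeeping are exactly the paper's route: classify each edge by the last scale $n$ at which $\nu^n_e\neq 1$, use $\bP(\nu^n_e\neq 1)\le c\,b_n/a_n^2$, $K_n<50\,b_n$, $\eta_n=b_n^{-(1+1/n)}$, $b_n\le a_n/\sqrt n$ and the geometric growth of $b_n$, and the two series you write down are estimated correctly; the paper's own proof is just a terser version of this (it silently drops the $K_n$-edges from $\bE\mu_e^{-p}$ and the $\eta_n$-edges from $\bE\mu_e^{p}$).

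The one step that does not work as written is your argument that $K_n>1$. At $K=1$ the environment is \emph{not} ``all ones except the $\eta_n$ edges'': off the level-$n$ obstacle it equals $\mu^{n-1}$, which contains, at every scale $m<n$, vertical edges of conductance $K_m$ (and $K_m\asymp b_m$ is huge). The Dirichlet energy of the linear function $f(x_1,x_2)=x_2/a_n$ in this environment is $a_n^{-2}\sum_{e\ \mathrm{vertical}}\mu_e\approx 1+\sum_{m<n}c\,K_m b_m/a_m^2$ minus a tiny contribution from the $\eta$-edges, which is strictly bigger than $1$; so this test function gives no bound of the form $\sigma_n^2(1)<1$, and the phrase ``the high-conductance edges become invisible'' also presupposes $\mu^{n-1}_e=1$ on those edges, which is not checked (lower-level obstacles are not excluded from those locations). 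Fortunately you need much less than $K_n\ge 1$: it suffices that $K_n^{-p}$ is summable against $b_n/a_n^2$, and for this the paper's own $K=0$ comparison in Theorem \ref{T:eK} applies verbatim to any $K\le\eta_n$, since then $\mu^n(K)\le\mu^{n-1}$ edge by edge regardless of overlaps, giving $\sigma_n^2(K)<1$ and hence $K_n>\eta_n$. Then $K_n^{-p}\le\eta_n^{-p}=b_n^{p(1+1/n)}$ and the $K$-term of your sum is absorbed into the $\eta$-term you already control; with that replacement your proof is complete and coincides in substance with the paper's.
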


\begin{proof}
Since $\mu_e^n = \eta_n  =b_n^{-1-1/n}$ on a proportion $cb_n/a_n^2$ of the
edges in $B_n$, we have
\bes
\bE \mu_e^{-p} \le c \sum_n b_n^{p(1+1/n)}   \frac{b_n}{a_n^2} 
\le  c \sum_n b_n^{p+p/n -1}  < \infty.
\ees
Here we used the fact that $b_n \geq  2^{n}$. Similarly,
\bes
 \bE \mu_e^p \le c \sum_n K_n^p \frac{b_n}{a_n^2} 
\le  c \sum_n \frac{b_n^{1+p}}{a_n^2} < \infty.
\ees
\end{proof}

\begin{remark} \label{R:Emu}
A more accurate calculation for the upper bound on $\sigma^2(K)$ gives that we need 
$K_n > c b_n$ and consequently $\bE \mu_e =\infty$. Note that we also have
\be
 \limsup_{n \to \infty} n \bP( \mu_e > n)  =
 \limsup_{k \to \infty} b_k \bP( \mu_e > c b_k) = \lim_{k \to \infty} \frac{b_k^2}{a_k^2} =0. 
\ee 
\end{remark}

From now on we take $K_n$ to be such that $\sigma_n=1$ for all $n$.

\section{Weak invariance principle}  

Let $X=(X_t, t \in \bR_+, P^x_\om, x \in \bZ^d)$ be the process with 
generator \eqref{e:Ldef} associated with the environment $(\mu_e)$.
Recall \eqref{j27.4} and the definition of $X^n$, and define $X^{(n,\eps)}$ by
\bes
 X^{(n,\eps)}_t = \eps X^n_{\eps^2 t}, \, t \ge 0. 
\ees
Let $P^\om_n(\eps)$ be the law of $X^{(n,\eps)}$ on $\sD=\sD_1$, and
$P^\om(\eps)$ be the law of $X^{(\eps)}$.

Recall that the Prokhorov distance $\dP$
between probability measures on $\calD_1$ is defined as follows (see \cite[p.~238]{B}). 
For $A \subset \calD$, let $\calB(A,\eps) = \{x\in \calD: d_S (x, A) < \eps\}$. 
For probability measures $P$ and $Q$ on $\calD$, 
$\dP(P,Q) $ is the infimum of $\eps>0$ such that $P(A) \le Q(\calB(A,\eps)) + \eps$ and 
$Q(A) \le P(\calB(A,\eps)) + \eps$ for all Borel sets $A \subset \calD$.
Recall that convergence in the metric $\dP$ is equivalent to the weak convergence of measures.

To prove the WFCLT it is sufficient to prove:

\begin{theorem}
Let $\eps_n = 1/b_n$. Then $\bP  \lim_{n \to \infty} \dP( P^\om(\eps_n), \PBM) =0$.
\end{theorem}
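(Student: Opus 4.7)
The plan is to compare the walk $X$ in the full environment $\mu$ with the walk $X^{n-1}$ in the periodic environment $\mu^{n-1}$, at the scale $\eps_n=1/b_n$. At this scale the rescaled trajectory lives on a $\Z^2$-ball of radius of order $b_n$ around $0$. Two facts make this comparison tight: locally, $\mu$ and $\mu^{n-1}$ typically agree on such a ball (because scale-$n$ obstacles lie at distance of order $\beta_n\gg b_n$ from a typical point, and higher-scale obstacles are very sparse); and $X^{n-1}$, being periodic with period $a_{n-1}\ll b_n$, should enjoy homogenization at scale $\eps_n$ with limit $W$ since Theorem~\ref{T:eK} gives $\sigma_{n-1}=1$.

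The first step is to fix an auxiliary scale $M_n\to\infty$ with $M_n=o(n^{1/4})$ (say $M_n=n^{1/8}$) and set
\[
G_n = \bigl\{\omega : \mu_e(\omega)=\mu^{n-1}_e(\omega)\text{ for every } e\in E(B_\infty(0,M_n b_n))\bigr\}.
\]
I would show $\bP(G_n)\to 1$ by a union bound over obstacle scales: the probability that $B_\infty(0,M_n b_n)$ hits a scale-$n$ obstacle is at most $C M_n/n$, because the obstacle centers lie at distance of order $\beta_n$ from the center of the tile (of side $a_n$) containing $0$, and $0$ is uniformly distributed in that tile; the probability it hits an obstacle of scale $m>n$ is at most $\sum_{m>n}C(M_n b_n)^2/(m b_m)$, which is $o(1)$ by condition (v).

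The second step is the coupling: on $G_n$, I would couple $X$ and $X^{n-1}$ by common Poisson clocks so that they coincide up to the exit time $\tau_n$ of $B_\infty(0,M_n b_n)$; this is legitimate because the jump rates agree on every edge inside the ball. Now $\mu^{n-1}$ is periodic with period $a_{n-1}$ and (arranging that the free parameters give) $a_{n-1}/b_n\to 0$, so Proposition~\ref{P:qnclt} combined with standard periodic homogenization on the torus $\Z^2/a_{n-1}\Z^2$ yields $d_P(P^\omega_{n-1}(\eps_n),\PBM)\to 0$ for $\bP$-a.e.\ $\omega$, uniformly in the shift $\sO_{n-1}$. In particular $\sup_{t\le 1}|X^{(n-1,\eps_n)}_t|\le M_n$ with $P^0_\omega$-probability tending to $1$, so with $\bP$-probability approaching one the coupled processes agree on all of $[0,b_n^2]$: $X^{(\eps_n)}\equiv X^{(n-1,\eps_n)}$ as paths in $\sD$. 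Combined with the homogenization limit this yields $d_P(P^\omega(\eps_n),\PBM)\to 0$ in $\bP$-probability.

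The main obstacle is the diagonal passage in the second step: Proposition~\ref{P:qnclt} as stated gives convergence only as $\eps\to 0$ for each fixed index $n-1$, while here we need convergence along simultaneous limits $\eps=\eps_n\to 0$ and $n\to\infty$. The fact that $\mu^{n-1}$ is periodic is crucial, since it reduces the question to a deterministic homogenization estimate on a torus whose period-to-scale ratio $a_{n-1}/b_n$ is $o(1)$ by the chosen growth sequences; this is where all the quantitative conditions on $(a_n,b_n,\beta_n)$ are finally put to use.
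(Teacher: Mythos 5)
Your overall architecture (couple $X$ with the level-$(n-1)$ periodic walk on an event where no obstacle of scale $\ge n$ comes near the origin at scale $b_n$, then use the CLT for $X^{n-1}$) is exactly the paper's, and the estimate $\bP(G_n)\to 1$ is essentially the paper's bound $\bP(F_n^c)\le c/n$. The problem is the step you yourself flag as the main obstacle: you do not close it. You claim that $\dP(P^\om_{n-1}(\eps_n),\PBM)\to 0$ along the diagonal follows from ``standard periodic homogenization'' because the period-to-scale ratio $a_{n-1}/b_n$ tends to $0$, i.e.\ that the growth conditions (iv)--(vi) on $(a_n,b_n,\beta_n)$ suffice. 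They do not. Any quantitative homogenization estimate for $\mu^{n-1}$ carries constants depending on the ellipticity contrast of that environment, which is of order $K_{n-1}/\eta_{n-1}\gtrsim b_{n-1}^{2}$, and nothing in (iv)--(vi) forces $b_n$ to beat such a contrast-dependent constant: they only give $b_n\ge 2^{n-1}b_{n-1}$ and $b_n>40\,a_{n-1}$, while $b_{n-1}$ itself may be astronomically larger than any function of $n$. So the assertion that the ratio condition alone yields the simultaneous limit $\eps=\eps_n\to0$, $n\to\infty$ is a genuine gap, not a routine appeal to a standard theorem.

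The paper resolves this by building the needed estimate into the construction rather than deriving it: for fixed $n$, the environment $\mu^{n-1}$ takes only finitely many values (finitely many possible phases of $\sO_1,\dots,\sO_{n-1}$), so the quenched convergence of Proposition \ref{P:qnclt} is automatically uniform in $\om$, and $b_n$ is then \emph{chosen} (condition (vii) in the list, display \eqref{e:an-choose1}) large enough that $\dP(P^\om_{n-1}(\eps),\PBM)<n^{-1}$ for all $\eps\le 1/b_n$ and all $\om$. With that adaptive choice there is no diagonalization to justify, and the rest of your argument (the coupling plus the escape estimate, which the paper handles with a fixed $\lambda$ and the Gaussian tail $\PBM(G(\lambda/2)^c)$ rather than your $M_n\to\infty$) goes through as in the paper. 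Your proof is repaired simply by replacing the homogenization-on-the-torus claim with an appeal to \eqref{e:an-choose1}; as written, the justification offered for the key step fails. (A minor secondary point: your bound $\sum_{m>n}C(M_nb_n)^2/(mb_m)$ for hitting higher-scale obstacles is not the right quantity and is not small, since it still contains a factor $b_n$; the correct estimate is $\sum_{m>n}c\,M_nb_nb_m/a_m^2\le c\,M_n/n$, as in Lemma \ref{L:62}.)
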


\begin{proof}
Let $n \ge 1$ and suppose that $a_k, b_k$ have been chosen for $k \le n-1$.
By Proposition \ref{P:qnclt} we have for each $\om$ that 
$\dP( P^\om_{n-1}(\eps), \PBM) \to 0$. Note that the environment $\mu^{n-1}$ 
takes only finitely many values. So we can choose $b_n$ large enough so that
\be \label{e:an-choose1}
 \dP( P^\om_{n-1}(\eps), \PBM) < n^{-1} \q \hbox{ for  $0< \eps\le \eps_n$ and all $\om$.}
\ee

Now for $\lam>1$ set
$$ G(\lam) = \{ w \in \sD_1: \sup_{0\le s\le 1} |w(s)| \le \lam \}. $$
We have 
$$ \PBM(G(\lam)^c) \le  \exp( - c' \lam^2 ). $$

We can couple the processes $X^{n-1}$ and $X$ so that the two processes agree 
up to the first time $X^{n-1}$ hits the obstacle set $\bigcup_{k=n}^\infty D_k$.
Let $\xi_n(\om) =  \min\{ |x| : x \in \bigcup_{k=n}^\infty D_k(\om)\} $, and 
$$ F_n =\{ \xi_n > \lam b_n \}. $$
Let $m \ge n$, and consider the probability that $0$ is within a distance
$\lam b_n$ of $D_m$. Then $\sO_m$ has to lie in a set of area $c \lam b_n b_m$, and so
$$ \bP( \min_{x \in D_m} |x| \le \lam b_n ) \le \frac{c b_n b_m}{a_m^2} \le \frac{ c b_n}{ m b_m}. $$
Thus 
$$ \bP(F^c_n) \le c \sum_{m=n}^\infty  \frac{b_n}{ m b_m} 
\le \frac{c}{n}\Big(1  + \sum_{m=n+1}^\infty \frac{b_n}{b_m} \Big) \le  \frac{c'}{n}. $$

Suppose that $\om \in F_n$ and $n\geq 2$ so that $n^{-1} < \lambda/2$. Then using the 
coupling above, we have
\begin{align*}
 \dP( P^\om(\eps_n), P^\om_{n-1}(\eps_n) ) 
  &\le P^\om_0( \sup_{0\le s \le b_n^2} |X^{(n-1)}_s| > \lam b_n ) \\ 
&\le  \dP( P^\om_{n-1}(\eps_n), \PBM) +  \PBM(G(\lam/2)^c). 
\end{align*}

If now $\delta>0$, choose $\lam >1$ such that $\PBM(G(\lam/2)^c) < \delta/2$,
and then $N> 2/ \delta$ large enough so that $\bP(F_n^c) <  \delta $ for $n \ge N$.
Then combining the estimates above, if $n \ge N$ and $\om \in F_n$, 
$\dP(  P^\om(\eps_n), \PBM) < \delta$, so for $n \ge N$,
$\bP( \dP( P^\om(\eps_n), \PBM) > \delta) \le \bP(F_n^c)  < \delta, $
which proves the convergence in probability. 
\end{proof}

\section{Quenched invariance principle does not hold}\label{s:quen} 

 We will prove that the QFCLT does not hold for the processes
$X^{(\eps_n)}$,  and will argue by contradiction. 
If the QFCLT holds for $X$ with limit $\Sigma W$ then
since the WFCLT holds for $X^{(\eps_n)}$ with diffusion constant 1, $\Sigma$ must be the identity. 

Let $w^0_n=( a'_n - 10 b_n -1, a'_n-\beta_n)$ be the centre point on the left edge 
of the lowest of the four $n$-th level obstacles in the set $D^0_n$, 
and let $z^0_n=w_n - (\tfrac12 b_n,0)$.  Thus $z^0_n$ is situated a distance
$\half b_n$ to the left of $w^0_n$ -- see Fig.~\ref{fig4}.
Let 
\begin{align*}
 H_n^0(\lam) = B_\infty( z^0_n, \lam b_n) , \quad
 H_n(\lam) = \bigcup_{x\in a_n\bZ^2} ( x+ \sO_n + H_n^0(\lam)).
\end{align*}

\begin{figure} \includegraphics[width=8cm]{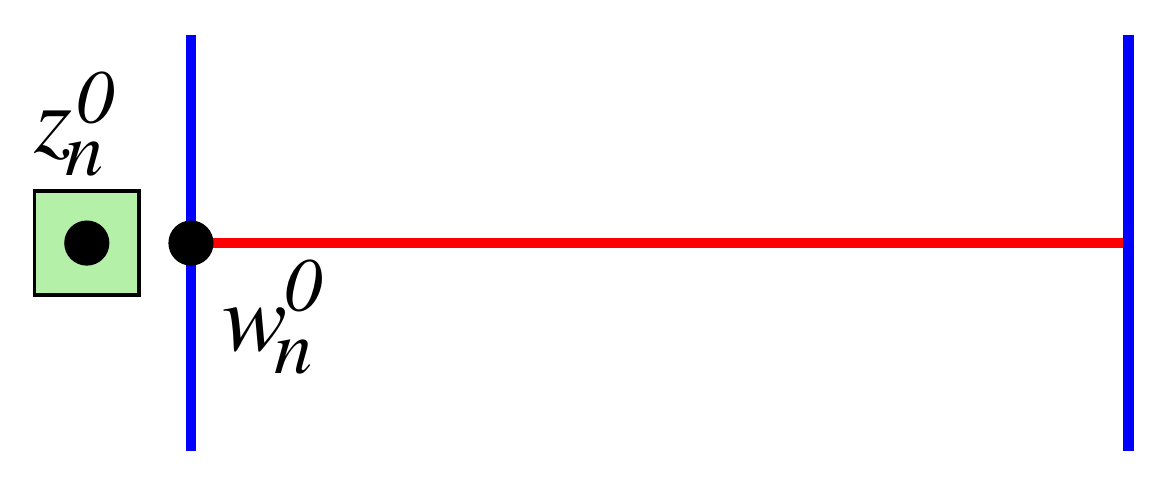}
\caption{The square represents $H_n^0(\frac{1}{8})$.}
\label{fig4}
\end{figure}

\begin{lemma} \label{L:hn}
For $\lam>0$ the event $\{0 \in H_n(\lam) \}$ occurs for infinitely many $n$, $\bP$-a.s.
\end{lemma}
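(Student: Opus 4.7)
The plan is to apply the conditional (L\'evy) form of the second Borel--Cantelli lemma to the filtration $\calG_n = \sigma(\sO_1,\dots,\sO_n)$ and the events $A_n = \{0 \in H_n(\lam)\}$, which are $\calG_n$-measurable. What needs to be established is the lower bound $\bP(A_n \mid \calG_{n-1}) \geq c\lam^2/n$ for all sufficiently large $n$, so that $\sum_n \bP(A_n \mid \calG_{n-1}) = +\infty$ almost surely.

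First I would reformulate the event. By definition, $H_n(\lam)$ is the periodic union of translates of $H_n^0(\lam) = B_\infty(z_n^0, \lam b_n)$, so $A_n$ is equivalent to $\sO_n \in V_n$, where $V_n \subset B_n'$ is the image of $-H_n^0(\lam)$ under reduction modulo $a_n$. Using the explicit formula for $z_n^0$ and the inequalities $b_n < \beta_n/100 < a_n/3000$ from condition (viii), one checks that for every fixed $\lam>0$ and all sufficiently large $n$, the set $V_n$ is a single unwrapped $\ell^\infty$-box of side $2\lam b_n$ sitting well inside $B_n'$.

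Next I would estimate the conditional probability. Conditional on $\calG_{n-1}$, the law of $\sO_n$ is uniform on the sublattice $L_n := (\sO_{n-1} + a_{n-1}\bZ^2) \cap B_n'$, which has cardinality $m_n^2 = (a_n/a_{n-1})^2$ and spacing $a_{n-1}$. Combining $b_n \geq 2^{n-1} b_{n-1}$ from (v) with $b_{n-1} \geq a_{n-1}/\sqrt{2(n-1)}$ from (iv) yields $b_n/a_{n-1} \geq 2^{n-1}/\sqrt{2(n-1)} \to \infty$; consequently, for each fixed $\lam>0$ and all $n \geq n_0(\lam)$ we have $\lam b_n \geq a_{n-1}$, and an elementary lattice-counting argument shows that $V_n$ contains at least $(\lam b_n/a_{n-1})^2$ points of $L_n$, uniformly in $\sO_{n-1}$. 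Dividing by $|L_n|=m_n^2$ gives
\begin{equation*}
\bP(A_n \mid \calG_{n-1}) \;\geq\; \frac{\lam^2 b_n^2/a_{n-1}^2}{m_n^2} \;=\; \frac{\lam^2 b_n^2}{a_n^2} \;\geq\; \frac{\lam^2}{2n},
\end{equation*}
where the last step uses $b_n \geq a_n/\sqrt{2n}$ from (iv).

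Summing over $n$ gives $\sum_n \bP(A_n \mid \calG_{n-1}) = +\infty$ almost surely, and since $A_n \in \calG_n$, L\'evy's conditional extension of the second Borel--Cantelli lemma delivers $\bP(A_n \text{ i.o.}) = 1$, as required. The only mildly delicate point is verifying that $V_n$ is genuinely a single unwrapped box staying well inside $B_n'$, so that the lattice-point count is clean and uniform in $\sO_{n-1}$; this reduces to a short calculation using the quantitative separation of scales $b_n \ll \beta_n \ll a_n$ in condition (viii), and is not expected to be the real obstacle.
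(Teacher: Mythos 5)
Your proposal is correct and follows essentially the same route as the paper: conditioning on $\calG_{n-1}=\sigma(\sO_1,\dots,\sO_{n-1})$, noting $\sO_n$ is uniform over the $m_n^2$ points of spacing $a_{n-1}$, bounding $\bP(0\in H_n(\lam)\mid\calG_{n-1})\ge c\lam^2 b_n^2/a_n^2\ge c\lam^2/n$, and concluding by the conditional (L\'evy) second Borel--Cantelli lemma. The only difference is that you spell out the lattice-point count (using $b_n/a_{n-1}\to\infty$ and the non-wrapping of $V_n$) which the paper treats as an "approximately $(2\lam b_n/a_{n-1})^2$" estimate.
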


\begin{proof}
Let $\sG_k=\sigma(\sO_1, \dots \sO_k)$. Given the values of $\sO_1, \dots \sO_{n-1}$,
the r.v. $\sO_n$ is uniformly distributed over $m_n^2$ points, with spacing $a_{n-1}$, and 
has to lie in a square with side $2 \lam b_n $ in order for the event  $\{0 \in H_n(\lam) \}$
to occur. Thus approximately $( 2\lam b_n/a_{n-1})^2$ of these values of
$\sO_n$ will cause $\{0 \in H_n(\lam) \}$ to occur. So 
$$ \bP( 0 \in H_n(\lam) \mid \sG_{n-1} ) \ge c \frac{ ( 2\lam b_n/ a_{n-1})^2 }{ (a_n/a_{n-1})^2}
 = c' \frac{ b_n^2}{a_n^2} \ge \frac{c''}{n}. $$
The conclusion then follows from an extension of the
second Borel-Cantelli Lemma. 
\end{proof}

\begin{lemma} \label{L:62}
With $\bP$-probability 1, the event
$G_n(\lam) = \{ H_n(\lam) \cap (\bigcup_{m=n+1}^\infty D_m ) \neq \emptyset \}$ 
occurs for only finitely many $n$.
\end{lemma}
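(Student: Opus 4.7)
The plan is to show that the event $G_n(\lambda)$ is in fact determined, $\bP$-almost surely, by a purely geometric condition that fails once $n$ is large compared to $\lambda$. The key first observation is the congruence $\sO_m\equiv\sO_n\pmod{a_n}$ for every $m>n$, $\bP$-almost surely. Indeed, by construction $\sO_j-\sO_{j-1}\in a_{j-1}\bZ^2$, and the divisibility chain $a_n\mid b_{n+1}\mid a_{n+1}\mid\cdots\mid a_{j-1}$ from condition (ii) forces each increment to lie in $a_n\bZ^2$; telescoping gives the claim. Since also $a_n\mid a_m$, the condition $H_n(\lambda)\cap D_m\neq\emptyset$ unpacks to the existence of $h\in H_n^0(\lambda)$, $d\in D_m^0$ with $h\equiv d\pmod{a_n}$. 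Thus, up to a null set,
$$ G_n(\lambda)\;=\;\Big\{ H_n^0(\lambda)\cap(D_m^0\bmod a_n)\neq\emptyset\text{ for some }m>n\Big\}, $$
which is deterministic.

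Next I would analyse $D_m^0\bmod a_n$ for $m>n$. From (ii), $a_n\mid a_{m-1}\mid b_m\mid\beta_m$, so $\beta_m\equiv 10b_m\equiv 0\pmod{a_n}$, while $a'_m=a_m/2\in\{0,a_n/2\}\pmod{a_n}$. Condition (v) gives $b_{n+1}\geq 2^n b_n\geq a_n$ for $n$ large, so each spine of $D_m^0$ (length $20b_m+1$) and each crossbar (length $2b_m+1$) has length at least $a_n$ and therefore projects modulo $a_n$ onto a full horizontal or vertical line of $B_n'$. Collecting the four reflected $I$-shapes, $D_m^0\bmod a_n$ is contained in the union $L$ of axis-parallel lines with coordinates in the fixed set
$$ C \;=\; \{0,\pm 1,\; a_n/2,\; a_n/2\pm 1\}\pmod{a_n}, $$
independently of $m$.

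Finally, I would verify that $H_n^0(\lambda)=B_\infty(z_n^0,\lambda b_n)$ is disjoint from $L$ once $n\geq n_0(\lambda)$. Its $x$-projection is an interval of half-width $\lambda b_n$ centred at $a'_n-(10.5)b_n-1\approx a_n/2-(10.5)b_n$, and its $y$-projection is centred at $a'_n-\beta_n$. Using (iv), $\ell_n=a_n/b_n\geq\sqrt{n}$, and (viii), $\beta_n\geq 100 b_n$, the distances from these intervals to each element of $C$ satisfy, up to $O(1)$ corrections, the following lower bounds: to $\{0,\pm 1\}\cup\{a_n-1\}$, at least $b_n(\sqrt{n}/2-11-\lambda)$; to $\{a_n/2,a_n/2\pm 1\}$ in the $x$-direction, at least $(10.5-\lambda)b_n$; and to $\{a_n/2,a_n/2\pm 1\}$ in the $y$-direction, at least $(100-\lambda)b_n$. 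For any fixed $\lambda$ (of moderate size), all three bounds are strictly positive once $n$ is large, so $H_n^0(\lambda)\cap L=\emptyset$; hence $G_n(\lambda)$ fails for $n\geq n_0(\lambda)$, proving the lemma.

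The main conceptual step, which looks like the obstacle at first, is noticing that the recursive choice of centres together with the divisibility built into Section~\ref{const} collapses the randomness: the probabilistic ``avoidance'' question becomes a deterministic computation on the torus $(\bZ/a_n\bZ)^2$. After that reduction, the argument is bookkeeping: the centre $z_n^0$ is calibrated to sit at offset exactly $(10.5)b_n$ from $x=a_n/2$ and $\beta_n\gg b_n$ from $y=a_n/2$, which is precisely what is needed to keep $H_n^0(\lambda)$ clear of every candidate higher-scale obstacle line.
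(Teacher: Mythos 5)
Your argument is correct for the values of $\lambda$ the paper actually needs, and it takes a genuinely different route from the paper's. The paper treats $G_n(\lambda)$ probabilistically: it bounds $\bP(H_n(\lambda)\cap D_m\neq\emptyset)\le c\,b_nb_m/a_m^2$ ``by considering possible positions of $\sO_m$'', sums over $m>n$ to get $\bP(G_n(\lambda))\le c2^{-n}$, and invokes Borel--Cantelli. You instead exploit the rigidity built into the construction: since $\sO_j\in\sO_{j-1}+a_{j-1}\bZ^2$ and $a_n\mid a_{m-1}\mid b_m\mid \beta_m$, one has $\sO_m\equiv\sO_n \pmod{a_n}$ (surely, not merely a.s.), so $H_n(\lambda)\cap D_m\neq\emptyset$ holds if and only if some $h\in H_n^0(\lambda)$ and $d\in D_m^0$ agree modulo $a_n$ --- a deterministic condition --- and the mod-$a_n$ picture of $D_m^0$ collapses to full lines with fixed coordinate in $\{0,\pm1,a_n/2,a_n/2\pm1\}$, which the square $H_n^0(\lambda)$ (offset $10.5\,b_n$ from $x=a_n'$ and $\beta_n\ge100\,b_n$ from $y=a_n'$) misses for all large $n$. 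This buys a stronger conclusion (deterministic failure of $G_n(\lambda)$ for all large $n$) and it also exposes a real weakness in the paper's own argument: conditionally on the past, every admissible position of $\sO_m$ is congruent to $\sO_n$ modulo $a_n$, so the conditional probability of $\{H_n(\lambda)\cap D_m\neq\emptyset\}$ is $0$ or $1$, not of order $b_nb_m/a_m^2$; the counting over positions of $\sO_m$ that works in Lemma \ref{L:hn} (where the event concerns the origin's position relative to the level-$n$ pattern) does not transfer to this lemma, and your derandomization is the honest way to settle it.

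Two caveats. First, your proof only covers $\lambda$ below an explicit threshold: you need roughly $\lambda<10$ for the $x$-distance to $a_n/2$ and $\lambda<100$ for the $y$-distance. This suffices for the paper (only $\lambda=1/8$ and $\lambda=4$ are used), but it is intrinsic rather than an artifact: your own reduction shows that if $a_m/a_n$ is odd then $D_m^0\bmod a_n$ contains the full line $x\equiv a_n/2$, so for $\lambda\ge 11$ the event $G_n(\lambda)$ would occur whenever such an $m>n$ exists; you should therefore state the restriction on $\lambda$ explicitly. Second, a small numerical slip: the distance from the $y$-projection of $H_n^0(\lambda)$ to the lines at $y\equiv0,\pm1$ is bounded below by $b_n(\sqrt n/2-n^{1/4}-\lambda)-O(1)$, since $\beta_n$ may be as large as $b_n n^{1/4}$, not by $b_n(\sqrt n/2-11-\lambda)$; this is still positive for large $n$, so the conclusion stands. (Also, the congruence $\sO_m\equiv\sO_n\pmod{a_n}$ holds for every realization, so no null set needs to be discarded.)
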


\begin{proof} 
Let $m>n$. Then as in the previous lemma, by considering possible
positions of $\sO_m$, we have
$$ \bP( H_n(\lam) \cap D_m \neq \emptyset ) \le c \frac{ b_m b_n}{a_m^2}
\le c \frac{ b_n}{b_m}. $$
Since $b_{m} \ge 2^m b_{m-1} > 2^m b_n$, 
$$ \bP \Big( H_n(\lam) \cap \Big\{\bigcup_{m=n+1}^\infty D_m \neq \emptyset \Big\} \Big) 
\le  \sum_{m=n+1}^\infty  c \frac{ b_n}{b_m} \le  c 2^{-n}, $$
and the conclusion follows by Borel-Cantelli. 
\end{proof}

\begin{lemma} \label{L:dontcross}
Suppose that $0 \in H_n(1/8)$ and $H_n(4) \cap \big(\bigcup _{m=n+1}^\infty D_m\big) =\emptyset$.
Write $X_t=(X^1_t, X^2_t)$, and let  
\bes
 F =\{ |X^2_t| \le 3b_n/4, |X^1_t| \le 2b_n, 0\le t\le b_n^2,  X^1_{b_n^2} > 3b_n/4 \}. 
\ees
Then there exists a constant $A_{n-1}=A_{n-1}(\eta_1, K_1, \dots \eta_{n-1}, K_{n-1})$ such that
\bes
P^0_\om( F) \le  c b_n^{-1/n} A_{n-1} \log A_{n-1}.
\ees
\end{lemma}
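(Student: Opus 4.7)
The plan is to bound $P^0_\omega(F)$ by the expected number of traversals of low-conductance edges by $X$ near the adjacent obstacle. Under the two hypotheses, the only level-$n$ obstacle intersecting the box $B:=[-2b_n,2b_n]\times[-3b_n/4,3b_n/4]$ (translated so that the starting point is at the origin) is the left cap of the lower horizontal ``I'' at scale $n$: a ``wall'' $W$ of approximately $2b_n$ horizontal edges of conductance $\eta_n=b_n^{-1-1/n}$, located at horizontal distance $\sim b_n/2$ from the origin. Since $0\in H_n(1/8)$, this cap covers at least the vertical range $|y|\le 7b_n/8$, which contains the strip $|X^2_t|\le 3b_n/4$ that $F$ imposes; hence the walker cannot bypass $W$ vertically and must traverse at least one edge of $W$ during $[0,b_n^2]$. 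Letting $N$ count the number of such traversals, $P^0_\omega(F)\le\bE^0_\omega[N]$.

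By the compensator formula for the jumps of $X$,
\[
\bE^0_\omega[N]=\eta_n\sum_{\{v_L,v_R\}\in W}\int_0^{b_n^2}P^0_\omega\bigl(X_s\in\{v_L,v_R\}\bigr)\,ds,
\]
so the task reduces to estimating $G_T(v):=\int_0^T P^0_\omega(X_s=v)\,ds$ with $T=b_n^2$ for each wall endpoint $v$. The VSRW in the bounded, periodic environment $\mu^{n-1}$ admits quenched Gaussian heat kernel bounds (via Nash/Moser, using boundedness of $\mu^{n-1}$ away from $0$ and $\infty$),
\[
p^{n-1}(s,0,v)\le A_{n-1}\,s^{-1}\exp\bigl(-|v|^2/(A_{n-1}s)\bigr),
\]
with $A_{n-1}$ depending only on $\eta_1,K_1,\dots,\eta_{n-1},K_{n-1}$. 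Since each wall endpoint $v$ lies at distance $|v|\sim b_n$ from the origin, the substitution $u=|v|^2/(A_{n-1}s)$ turns $\int_0^{b_n^2}p^{n-1}(s,0,v)\,ds$ into $\int_{c/A_{n-1}}^\infty A_{n-1}u^{-1}e^{-u}\,du$, which is bounded by $c'A_{n-1}\log A_{n-1}$. I would then transfer this bound from $X^{n-1}$ to $X$ via a graphical coupling that shares Poisson edge-clocks on the edges where $\mu=\mu^{n-1}$ and uses independent clocks on the $O(b_n)$ obstacle edges inside $B$, yielding $G_T(v)\le CA_{n-1}\log A_{n-1}$. Summing over the $O(b_n)$ wall vertices and multiplying by $\eta_n=b_n^{-1-1/n}$ produces the claimed bound $P^0_\omega(F)\le c\,b_n^{-1/n}A_{n-1}\log A_{n-1}$.

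The hard part will be controlling the contribution to $G_T(v)$ after $X$ and $X^{n-1}$ first decouple under the graphical coupling. Two features of the environment make this tractable: disagreements along $W$ occur at the small rate $\eta_n$ per wall-vertex visit and so are rare on the time scale $b_n^2$ by a Markov-type argument; and the high-conductance bar intersects $B$ only in a thin strip of vertical width $O(1)$ along $y\approx 0$, sitting on the far side of $W$ from the origin, so it affects the walk started at $0$ only conditionally on the event $\{N\ge 1\}$ we are trying to bound. Iterating the coupling across successive disagreements and invoking the strong Markov property, the cumulative post-disagreement contribution to $G_T(v)$ is of lower order and may be absorbed into the constant $A_{n-1}\log A_{n-1}$.
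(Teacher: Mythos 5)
Your overall skeleton matches the paper's: reduce $F$ to the event that the walk traverses one of the $O(b_n)$ horizontal edges of conductance $\eta_n$ in the crossbar at distance $\sim b_n/2$ from the start, bound that by $\eta_n$ times an expected occupation time at the barrier vertices, and estimate the occupation time by a quenched Gaussian heat kernel bound (Delmotte) with a constant $A_{n-1}$ coming only from the uniformly elliptic lower-level environment, giving $c\,\eta_n b_n A_{n-1}\log A_{n-1}=c\,b_n^{-1/n}A_{n-1}\log A_{n-1}$. The arithmetic in your heat-kernel step is the same as the paper's.

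However, there is a genuine gap at the transfer step, and it is exactly the point your last paragraph waves at. Writing $P^0_\omega(F)\le \bE^0_\omega[N]=\eta_n\sum_{e}\int_0^{b_n^2}P^0_\omega(X_s\in e)\,ds$ forces you to bound the Green-type quantity $G_T(v)$ for the \emph{true} walk $X$ over the whole time horizon, including after $X$ has crossed the barrier. On the far side of the barrier the box contains the spine of the level-$n$ obstacle, whose edges have conductance $K_n\asymp b_n$, and no heat kernel or occupation-time bound with constant $A_{n-1}$ applies there; the post-decoupling contribution cannot simply be ``absorbed into the constant,'' since recrossing frequencies and occupation near the spine are governed by $\eta_n$ and $K_n$, not by $A_{n-1}$ (and arguing they are negligible because crossing is unlikely is circular, as that is the quantity being bounded). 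The paper avoids this entirely: it only needs the event that the \emph{first} traversal occurs before time $b_n^2$ with the pre-traversal path confined, and it realizes the walk up to that traversal as the reflected process $Y$ whose conductances are $\mu^{n-1}$ with the barrier edges set to $0$, the traversal happening when $\eta_n L_t$ (with $L$ the time $Y$ spends at the barrier column $J$) exceeds an independent ${\tt exp}(1)$ variable. This yields the exact identity $P^0_\omega(G\cap\{T\le b_n^2\})=\bE^0_\omega\bigl[\mathbf{1}_G\bigl(1-e^{-\eta_n L_{b_n^2}}\bigr)\bigr]\le \eta_n \bE^0_\omega L_{b_n^2}$, so that only the reflected, uniformly elliptic dynamics ever need heat-kernel control. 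To repair your argument, replace $\bE[N]$ for the full walk by this first-crossing/reflection construction (or, equivalently, run your compensator bound only up to the first traversal time, with the pre-traversal law identified with that of $Y$); the rest of your computation then goes through as written.
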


\proof Let $w_n=(x_n,y_n)$ be the element of $\{ w^0_n + \sO_n + a_n x, x\in \bZ^2\}$ which is
closest to 0. Then, under the 
hypotheses of the Lemma, we have $3 b_n/8 \le x_n \le 5 b_n/8$, and $|y_n| \le b_n/8$.
Thus the square $B_\infty(0,2b_n)$ intersects the obstacle set $D_n$, but does not intersect
$D_m$ for any $m>n$. Hence if $F$ holds then we can couple $X^n$ and $X$ so that
$X^n_t=X_t$ for $0 \le t\le b_n^2$.

Let $\bH=\{ (x,y): x \le x_n \}$, and $J=B \cap \pd_i \bH$.
If $F$ holds then $X^n$ has to cross the line $J$, and therefore has to cross
an edge of conductance $\eta_n$. 
Let $Y$ be the process with edge conductances $\mu'_e$, where
$\mu'_e=\mu^{n-1}_e$ except that $\mu'_e=0$ if $e=\{ (x_n,y), (x_n+1,y)\}$ for $y \in \bZ$.
Thus the line $\pd_i \bH$ is a reflecting barrier for $Y$. Let
$$ L_t = \int_0^t 1_{ (Y_s \in J) }ds $$ 
be the amount of time spent by $Y$ in $J$, and
$$ G= \{ |Y^2_t| \le 3b_n/4, |Y^1_t| \le 2b_n, 0\le t\le b_n^2 \}.$$
Assuming that $G$ holds, let $\xi_1$ be a standard
${\tt exp(1)}$ r.v., set $T= \inf\{s: L_s > \xi_1/\eta_n \}$, and let $X^n_t=Y_t$ on $[0,T)$, and
$X^n_T = Y_T + (1,0)$. 
Note that one can complete the definition of $X^n_t$ for $t\geq T$ in such a way that
the process $X^n$ has the same distribution as the process defined by \eqref{j27.4}.
We have
$$ P^0_\om( G \cap \{ X^n_s = Y^n_s, 0\le s \le b_n^2 \}) = 
 E^0_\om( 1_G \exp( - \eta_n L_{b_n^2} ) ). $$
So
$$ P^0_\om( G \cap \{ T \le b_n^2 \}) =  E^0_\om( 1_G (1-\exp( - \eta_n L_{b_n^2} ) )
\le  E^0_\om( 1_G \eta_n L_{b_n^2}) \le \eta_n E^0_\om L_{b_n^2}. $$
The process $Y$ has conductances bounded away from 0 and infinity on $\bH$, 
so by \cite{D1} $Y$ has a transition probability $p_t(w,z)$ which satisfies
$$ p_t(w,z) \le A t^{-1} \exp( A^{-1} |w-z|^2/t ), \, w,z \in \bH, \, t \ge |w-z|. $$
In addition if $r=|w-z|\ge A$ then $p_t(w,z) \le p_r(w,z)$. 
Here $A=A_{n-1}$ is a possibly large constant which depends on $(\eta_i, K_i, 1\le i \le n-1)$.
We can take $A \ge 10$. For $w\in J$ we have $|w| \ge b_n/4$ and so provided $b_n \ge 8A$,
\begin{align*}
E^0_\om \int_0^{b_n^2} 1_{( Y_s =w )} ds 
&= \int_0^{b_n^2} p_t(0,w) dt 
\le b_n p_{b_n} (0,w) + \int_{b_n}^{b_n^2}  p_t(0,w) dt \\
&\le c A e^{- b_n/A} + A \int_{0}^{b_n^2} t^{-1} \exp( -b_n^2/16 At )dt  \le c A \log (A). 
\end{align*}
So since $|J| \le 2b_n$, 
\bes
 P^0_\om( G \cap \{ T \le b_n^2 \} )\le  c \eta_n b_n A \log A \le c b_n^{-1/n} A \log A.
\ees
Finally, the construction of $X^n$ from $Y$ gives that
$P^0_\om(F) \le  P^0_\om( G \cap \{ T \le b_n^2 \} )$.
\qed

\begin{proof}[Proof of Theorem \ref{T:main1}(b).]
We now choose $b_n$ large enough so that for all $n\ge 2$,
\be \label{e:bncond1}
b_n^{-1/n} A_{n-1} \log A_{n-1} < n^{-1}.
\ee

Let $W_t = (W^1_t, W^2_t)$ denote 2-dimensional Brownian motion with $W_0=0$,
and let $\PBM$ denote its distribution. 
For a 2-dimensional process $Z=(Z^1, Z^2)$, define the event 
\begin{align*}
F(Z)  = \Big \{  |Z^2_s| < 3/4, |Z^1_s| \le 2, 0\le s \le 1,  Z^1_1 > 1 \Big \}. 
\end{align*}
The support theorem implies that $p_1 := \PBM(F(W)) >0$.
Write $F_n = F(X^{(\eps_n)})$. 

Let $N_1 = N_1(\om)$ be such that the event $G_n(4)$ defined in Lemma
\ref{L:62} does not occur for $n \ge N_1$.
Let $\Lambda=\Lambda(\om)$ be the set of $n > N_1$ such that
$0 \in H_n(\tfrac18)$. Then $\bP(\Lambda \hbox{ is infinite})=1$ by  Lemma  \ref{L:hn}.
By Lemma \ref{L:dontcross} and the choice of $b_n$ in \eqref{e:bncond1} 
we have $ P^0_\om( F_n)  <  cn^{-1}$ for $n\in\Lambda$. So 
$$ P^0_\om( F_n ) \to 0 \hbox{ as  $n \to \infty$ with $n \in \Lambda$} . $$
Thus whenever $\Lambda(\om)$ is infinite the sequence of processes 
$ (X^{(\eps_n)}_t, t \in [0,1], P^0_\om), \, n \ge 1, $
cannot converge to $W$, and the QFCLT therefore fails.   
\end{proof}

\begin{remark} \label{R:Zd}
We can construct similar obstacle sets in $\bZ^d$ with $d \ge 3$, and 
we now outline briefly the main differences from the $d=2$ case.

We take $b_n = a_n n^{-1/d}$, so that $\sum b_n^d/a_n^d =\infty$, and the analogue 
of Lemma \ref{L:62} holds. 
In a cube side $a_n$ we take $2d$ obstacle sets, arranged in symmetric fashion 
around the centre of the cube. Each obstacle has an associated `direction' $i \in \{1, \dots d\}$.
An obstacle of direction $i$ consists of a $2 b_n^{d-1}$ edges of
low conductance $\eta_n$, arranged in two $d-1$ dimensional `plates' a distance 
$M b_n$ apart, with each edge in the direction $i$. The two plates are connected by 
$d-1$ dimensional plates of high conductance $K_n$.
Thus the total number of edges in the obstacles is $c b_n^{d-1}$, so taking $a_n/a_{n-1}$
large enough, we have $\sum b_n^{d-1}/a_n^d<\infty$, and the same arguments as in 
Section \ref{const} show that the environment is well defined, stationary and ergodic.

The conductivity across a cube side $N$ in $\bZ^d$ is $N^{d-2}$.  Thus if we write 
$\sigma^2_n(\eta_n, K_n)$ for the limiting diffusion constant of the process $X^n$, 
and $R_n=R_n(\eta_n,K_n)$ for the effective resistance across a cube side $a_n$, then
\eqref{e:vp} is replaced by:
\be \label{e:diff-d}
 \sigma_n^2(\eta_n, K_n) = a_n^{2-d} R_n^{-1}. 
\ee
For the QFCLT to fail, we need $\eta_n = o(b_n^{-1})$, as in the two-dimensional
case. With this choice we have $R_n(\eta_n, 0)^{-1} < a_n^{d-2}$, and as in
Theorem \ref{T:eK} we need to show that if $K_n$ is large enough then 
$R_n(\eta_n, K_n)^{-1} > a_n^{d-2}$.

Recall that $\ell_n=a_n/b_n$. 
Let $I'$ be as in Theorem \ref{T:eK}; then $I'$ has flux $\ell_n^{-d+1}$ across each
sub-cube $Q'$ of side $b_n$. If the sub-cube does not intersect the obstacles at level
$n$, then $E_{Q'}(I',I')= \ell_n^{-d} a_n^{2-d}$. 
The `cost' of diverting $I'$ around a low conductance obstacle is therefore of order
$c \ell_n^{-d} a_n^{2-d}= c b_n^{-d+2} \ell_n^{-2d+2}$ -- see \cite{McG}.
As in Theorem \ref{T:eK} we divert  the flow onto the regions of high conductance, 
so as to obtain some cubes in which the new flow has zero energy.
To estimate the energy in the high conductance bonds, note that
we have $2(d-1)b_n^{d-2}$ sets of parallel paths of edges of high conductance, and
each path is of length $M b_n$, so the flow in each edge is
$F_n= \ell_n^{-d+1}/ b_n^{d-2}(2d-2)$. Hence the total energy dissipation in the 
high conductance edges is
$$ K^{-1} M F_n^2=  \frac{ c'K^{-1} M b_n^{d-1}}{ \ell_n^{2d-2} b_n^{2d-4}} 
  = \frac{c'K^{-1} M }{ \ell_n^{2d-2} b_n^{d-3}}. $$
We therefore need
$$  \frac{c'K^{-1}  M }{ \ell_n^{2d-2} b_n^{d-3}} < \frac{c}{ b_n^{d-2} \ell_n^{2d-2}}, $$
that is we need to choose $K_n > c M b_n$ for some constant $c$.  Since
$$ \bE \mu_e^p \asymp \sum_n \frac{K_n^p b_n^{d-1}} {a_n^d} 
\asymp M \sum_n \frac{b_n^{d-1+p}}{a_n^d}, $$
we find that in $d\ge 3$ our example also has $\bE \mu_e^{\pm p}<\infty$ if and only if $p<1$.
\end{remark}


\begin{thebibliography}{99}
\bibliographystyle{plain}

\bibitem[ABDH]{ABDH} S.~Andres, M.T.~Barlow, J.-D.~Deuschel and B.M.~Hambly. 
Invariance principle for the random conductance model. To appear
{\em Prob. Th. Rel. Fields.}

\bibitem[BaB]
{BB3} M. T. Barlow and R. F. Bass. On the resistance of the Sierpinski
carpet. {\it Proc. R. Soc. London A.} {\bf 431} (1990) 345-360.

\bibitem[BD]{BD} M.T. Barlow and J.-D. Deuschel.
Invariance principle for the random conductance model with unbounded 
conductances. {\sl Ann. Probab.} {\bf 38} (2010),  234-276 

\bibitem[BBTA]{BBT-A} M.T. Barlow, K. Burdzy, A. Timar. 
Appendix to: Comparison of quenched and annealed invariance 
principles for random conductance model.
{\tt arxiv}

\bibitem[BLP]{BLP} A. Bensoussan, J.-L. Lions, G. Papanicolau.
{\em Asymptotic analysis for periodic structures.}
North Holland, 1978.

\bibitem[BeB]{BeB} N. Berger, M. Biskup.
Quenched invariance principle for simple random walk on percolation clusters. 
{\em Probab. Theory Rel. Fields} {\bf 137} (2007),  no. 1-2, 83--120. 

\bibitem[Bi]{B}
P.~Billingsley, {\it Convergence of probability measures}. Second edition. 
Wiley Series in Probability and Statistics: Probability and Statistics. 
A Wiley-Interscience Publication. John Wiley \& Sons, Inc., New York, 1999. 

\bibitem[Bi]{Bi}
M. Biskup. Recent progress on the Random Conductance Model.
{\it Prob. Surveys} {\bf 8} (2011) 294--373.

\bibitem[BP]{BP} M. Biskup, T.M. Prescott.
Functional CLT for random walk among bounded random conductances. 
{\em Elec. J. Prob.} {\bf 12} (2007), paper 49, 1323-1348. 

\bibitem[BK]{BK} K.~Burdzy and D.~Khoshnevisan, 
Brownian motion in a Brownian crack
{\it Ann. Appl. Probab. \bf 8} (1998), 708--748.

\bibitem[BTW]{BTW} K.~Burdzy, E.~Toby and R.J.~Williams, 
On Brownian excursions in Lipschitz domains. Part II.
Local asymptotic distributions, in {\em Seminar
on Stochastic Processes 1988}   (E. Cinlar, K.L. Chung, R. Getoor, J. Glover,
editors), 1989, 55--85, Birkh\"auser, Boston.

\bibitem[D1]{D1} 
T. Delmotte. Parabolic Harnack inequality and estimates
of Markov chains on graphs. {\sl Rev. Math. Iberoamericana} {\bf 15}
(1999), 181--232. 

\bibitem[DFGW]{DFGW} A. De Masi, P.A. Ferrari, S. Goldstein, W.D. Wick. 
An invariance principle for reversible Markov processes. 
Applications to random motions in random environments. 
{\it J. Statist. Phys.} {\bf 55} (1989), 787--855. 

\bibitem[DS]{DS} 
P. Doyle, J.L. Snell. {\sl Random Walks and Electrical Networks.}
Math. Assoc. America, Washigton D.C. 1984.
Arxiv: .PR/0001057. 

\bibitem[EK]{EK} S.N. Ethier, T.G. Kurtz.
{\it Markov Processes: characterization and convergence}. Wiley, 1986.

\bibitem[MP] {MP} P. Mathieu, A. Piatnitski.
Quenched invariance principles for random walks on percolation clusters.
{\em Proc. R. Soc. Lond. Ser. A Math. Phys. Eng. Sci.} {\bf 463}
(2007),  no. 2085, 2287--2307.

\bibitem[M1] {M1} P. Mathieu.
Quenched invariance principles for random walks with random conductances.  
{\em J. Stat. Phys.} {\bf 130} (2008),  no. 5, 1025--1046. 

\bibitem[McG]{McG}
{McG} I. McGillivray. Resistance in higher-dimensional Sierpi\'nski 
carpets. {\sl Potential Anal.} {\bf 16} (2002), no. 3, 289--303. 

\bibitem[Os]{Os} H. Osada. Homogenization of diffusion processes with
random stationary coefficients. In: Probability Theory and Mathematical
Statistics, Tbilissi, 1982. Lecture Notes in Math. {\bf 1021}, Springer,
Berlin, 1983, pp 507--517.

\bibitem[SS] {SS} V. Sidoravicius and A.-S. Sznitman.  Quenched invariance
  principles for walks on clusters of percolation or among random
  conductances.  {\em Probab. Theory Rel. Fields} {\bf 129} (2004),
  no.  2, 219--244.

\bibitem[ZP]{ZP}
V.V. Zhikov, A.L. Piatnitskii.
Homogenization of random singular structures and random measures. (Russian) 
{\em Izv. Ross. Akad. Nauk Ser. Mat.} {\bf 70} (2006), no. 1, 23--74; 
translation in {\em Izv. Math.} {\bf 70} (2006), no. 1, 19�67


\end{thebibliography}
\end{document}